\documentclass[a4note,11pt]{article}
 \usepackage[english]{babel}
 \usepackage{graphicx}
\usepackage[margin=1.54in]{geometry}
 \usepackage[utf8]{inputenc}
 \usepackage[T1]{fontenc}
 \usepackage{lmodern}
 \usepackage[normalem]{ulem}
 \usepackage{verbatim}
 \usepackage{bbm}
 \usepackage{ntheorem}
 \usepackage{stmaryrd}
 \usepackage{amsmath}
\usepackage{dsfont}
 \usepackage{amssymb}
\usepackage{hyperref}
\usepackage{enumitem}

\usepackage{graphicx}
\usepackage{subcaption}

\usepackage{pgf,tikz}
\usepackage{mathrsfs}
\usetikzlibrary{arrows}

\usepackage{color}

%\theoremheaderfont{\bfshape}

\theoremseparator{.}
\newtheorem{theorem}{Theorem}[section]
\newtheorem{thm}[theorem]{Theorem}

\newtheorem{coro}[theorem]{Corollary}
\newtheorem{lemma}[theorem]{Lemma}

\newtheorem{prop}[theorem]{Proposition}

\theoremstyle{definition}
\newtheorem{question}[theorem]{Question}

\newenvironment{rem}[1][Remark.]{\begin{trivlist}
\item[\hskip \labelsep {\itshape #1}]}{\end{trivlist}}

%%%% Le carr� CQFD - d�but macro %%%%
\def\sqw{\hbox{\rlap{\leavevmode\raise.3ex\hbox{$\sqcap$}}$%
\sqcup$}}

%%%% fin macro %%%%

\newcommand{\N}{\ensuremath{{\mathbb N}}}

\newcommand{\Z}{\ensuremath{\mathbb Z}}

\newcommand{\Pp}{\mathbb{P}}

\newcommand{\defini}{\textbf}

\def\acts{\curvearrowright}

\newcommand{\saut}{\vspace{0.25cm}}

\newenvironment{proof}{  
    \vspace*{-.4em}  {\it Proof.}%
}{
    \hfill\sqw\vspace*{.5em}
}

\newenvironment{proofpu}{  
    \vspace*{-.4em}  {\it Proof of Theorem~\ref{thm:pu}.}%
}{
    \hfill\sqw\vspace*{.5em}
}

\setlist[itemize]{noitemsep, nolistsep}
\setlist[enumerate]{noitemsep, nolistsep}

\author{Sébastien \textsc{Martineau}\footnote{Université Paris-Sud, {\href{mailto:sebastien.martineau@u-psud.fr}{\nolinkurl{sebastien.martineau@u-psud.fr}}}}~~and Franco \textsc{Severo}\footnote{Institut des Hautes \'Etudes Scientifiques, {\href{mailto:severo@ihes.fr}{\nolinkurl{severo@ihes.fr}}}}}
\title{Strict monotonicity of percolation thresholds under covering maps}
\begin{document}

\maketitle

\begin{abstract}
 We answer a question of Benjamini and Schramm by proving that under reasonable conditions, quotienting a graph strictly increases the value of its percolation critical parameter $p_c$. More precisely, let $\mathcal{G}=(V,E)$ be a quasi-transitive graph with $p_c(\mathcal{G})<1$, and let $G$ be a nontrivial group that acts freely on $V$ by graph automorphisms. Assume that $\mathcal{H}:=\mathcal{G}/G$ is quasi-transitive. Then one has $p_c(\mathcal{G})<p_c(\mathcal{H})$.

We provide results beyond this setting: we treat the case of general covering maps and provide a similar result for the uniqueness parameter $p_u$, under an additional assumption of boundedness of the fibres. The proof makes use of a coupling built by lifting the exploration of the cluster, and an exploratory counterpart of Aizenman-Grimmett's essential enhancements.
\end{abstract}

\vspace{0.1cm}

\section{Introduction}

Bernoulli percolation is a simple model for problems of propagation in porous media that was introduced in 1957 by Broadbent and Hammersely \cite{broadbenthammersley}: given a graph $\mathcal{G}$ and a parameter $p\in [0,1]$, erase each edge independently with probability $1-p$. Studying the connected components of this random graph (which are referred to as \defini{clusters}) has been since then an active field of research: see the books \cite{grimmettbook, lyonsperesbook}. A prominent quantity in this theory is the so-called \defini{critical parameter} $p_c(\mathcal{G})$, which is characterised by the following dichotomy: for every $p<p_c(\mathcal{G})$, there is almost surely no infinite cluster, while for every $p>p_c(\mathcal{G})$, there is almost surely at least one infinite cluster.

Originally, the main focus was on the Euclidean lattice $\Z^d$.
In 1996, Benjamini and Schramm initiated the systematic study of Bernoulli percolation on more general graphs, namely quasi-transitive graphs \cite{bspercobeyond}. A graph is \defini{quasi-transitive} (resp. \defini{transitive}) if the action of its automorphism group on its vertices yields finitely many orbits (resp. a single orbit). Intuitively, a graph is quasi-transitive if it has finitely many types of vertices, and transitive if all the vertices look the same.
The paper \cite{bspercobeyond} contains, as its title suggests, many questions and a few answers: in their Theorem~1 and Question~1, they investigate the monotonicity of $p_c$ under quotients. Their Question~1 is precisely the topic of the present paper. It goes as follows.

\paragraph{Setting of \cite{bspercobeyond}} Let $\mathcal{G}=(V,E)$ be a locally finite \emph{connected} graph. Let $G$ be a group acting on $V$ by graph automorphisms. 
A vertex of the \defini{quotient graph} $\mathcal{G}/G$ is an orbit of $G\acts V$, and two distinct orbits are connected by an edge if and only if there is an edge of $\mathcal{G}$ intersecting both orbits.

Theorem~1 of \cite{bspercobeyond} asserts that $p_c(\mathcal{G})\leq p_c(\mathcal{G}/G)$. It is proved by lifting the exploration of a spanning tree of the cluster of the origin from $\mathcal{G}/G$ to $\mathcal{G}$. They then ask the following natural question.
Recall that a group action $G\acts X$ is \defini{free} if the only element of $G$ that has a fixed point is the identity element: $$\forall g\in G\backslash \{1\},~\forall x\in X,~g x \not= x.$$

\begin{question}[Benjamini-Schramm]
\label{quest:bs}
Consider the quotient graph $\mathcal{H}:= \mathcal{G}/G$. Assume that $p_c(\mathcal{G})<1$, $G\not=\{1\}$ acts freely on $V(\mathcal{G})$, and both $\mathcal{G}$ and $\mathcal{H}$ are quasi-transitive. Is it necessarily the case that $p_c(\mathcal{G})<p_c(\mathcal{H})$?
\end{question}

The main result of the {present} paper is a positive answer to this question: see Corollary~\ref{coro:answer}. We use an exploratory version of Aizenman-Grimmett's essential enhancements \cite{aizenmangrimmett}, and build a coupling between $p$-percolation on $\mathcal{G}$ and enhanced percolation on $\mathcal{H}$ by lifting the exploration of the cluster of the origin. The part of our work devoted to essential enhancements (Section~\ref{sec:prooftwo}) follows the Aizenman-Grimmett strategy, thus making crucial use of certain differential inequalities, see also \cite{menshikovstrict}. Our coupling (Section~\ref{sec:proofone}) improves on that used in \cite{bspercobeyond}.

We also address in Theorem~\ref{thm:pu} a similar question for the \defini{uniqueness parameter} $p_u$. Recall that given a \emph{quasi-transitive graph} $\mathcal{G}$, the number of infinite connected components for Bernoulli percolation of parameter $p$ takes an almost sure value $N_\mathcal{G}(p)\in \{0,1,\infty\}$, and that the following monotonicity property holds: $\forall~ p<q,~N_\mathcal{G}(p)=1\implies N_\mathcal{G}(q)=1$, see \cite{Sch99}. One thus defines $p_u(\mathcal{G}):= \inf \{p\in [0,1]:N_\mathcal{G}(p)=1\}$.

\begin{center}
\setlength{\unitlength}{1mm}
\thicklines
\begin{picture}(100,10)
\put(0,0){
   \line(1,0){100}
}

\put(33,6){$p_c$}
\put(33,-1.5){
   \line(0,1){3}
}

\put(67,6){$p_u$}
\put(67,-1.5){
   \line(0,1){3}
}

\put(0,6){$0$}
\put(0,-1.5){
   \line(0,1){3}
}

\put(100,6){1}
\put(100,-1.5){
   \line(0,1){3}
}

\put(12,3){$N_\mathcal{G} = 0$}

\put(45,3){$N_\mathcal{G} = \infty$}

\put(79,3){$N_\mathcal{G} = 1$}
\end{picture}
\end{center}

Let us mention that a theorem quite similar to our Corollary~\ref{coro:answer} has already been obtained for the connective constant instead of $p_c$. See Theorem~3.8 in \cite{glstrict}.

\section{Definitions and results}

To avoid any ambiguity, let us review the relevant vocabulary.

\vspace{-0.2cm}

\paragraph{Convention} Graphs are taken to be non-empty, locally finite (every vertex has finitely many neighbours) and \emph{connected}. Subgraphs (e.g.~percolation configurations) may not be connected. Unless otherwise stated, our graphs are taken to be simple (no multiple edges, no self-loops, edges are unoriented). A graph $\mathcal{G}$ may be written in the form $(V,E)$, where $V=V(\mathcal{G})$ denotes its set of vertices and $E=E(\mathcal{G})$ its set of edges. An edge is a subset of $V$ with precisely two elements. The \defini{degree} of a vertex is its number of neighbours. Graphs are endowed with their respective graph distance, denoted by $d$. Finally, percolation is taken to mean Bernoulli \emph{bond} percolation, but our proofs can be adapted to Bernoulli \emph{site} percolation. 

\vspace{0.2cm}

In Question~\ref{quest:bs}, the graphs $\mathcal{G}$ and $\mathcal{H}$ are related via the quotient map $\pi : x \mapsto Gx$. This map is a \defini{weak covering map}, meaning that it is 1-Lipschitz for the graph distance and that  it has the \defini{weak lifting property}: for every $x\in V(\mathcal{G})$ and every neighbour $u$ of $\pi(x)$, there is a neighbour of $x$ that is mapped to $u$. This fact does not use the freeness of the action of $G$ or quasi-transitivity. {Weak covering maps are by definition able to lift edges, but it turns out they can also lift trees}, meaning that for every subtree of the target space and every vertex in the preimage of the tree, there is a lift of the tree that contains this vertex. Recall that given a subtree\footnote{i.e.~a tree with $V(\mathcal{T})\subset V(\mathcal{H})$ and $E(\mathcal{T})\subset E(\mathcal{H})$.} $\mathcal{T}$ of $\mathcal{H}$, a \defini{lift} of $\mathcal{T}$ is a subtree $\mathcal{T}'$ of $\mathcal{G}$ such that $\pi$ induces a graph isomorphism from $\mathcal{T}'$ to $\mathcal{T}$, i.e.~it induces well-defined bijections from $V(\mathcal{T}')$ to $V(\mathcal{T})$ and from $E(\mathcal{T}')$ to $E(\mathcal{T})$.

The map $\pi$ satisfies a second property, namely \defini{disjoint tree-lifting}: if $\mathcal{T}$ is a subtree of $\mathcal{H}$ and if $x$ and $y$ are distinct vertices of $\mathcal{G}$ such that $\pi(x)=\pi(y)$ belongs to $V(\mathcal{T})$, then one can find two \emph{vertex-disjoint} lifts of $\mathcal{T}$ such that one of them contains $x$ and the other $y$. This fact uses the freeness of $G$, and is established in Lemma~\ref{lem:disjointtrees}.

Finally, the map $\pi$ \defini{has tame fibres}: there is some $R$ such that for every $x\in V(\mathcal{G})$, there is some $y\in V(\mathcal{G})$ satisfying $\pi(x)=\pi(y)$ and $0<d(x,y)\leq R$. See Lemma~\ref{lem:tamefibres}.

\vspace{0.2cm}

{It turns out} that these three properties of $\pi$ suffice to prove strict inequality, so that there is actually no need for group actions and quasi-transitivity.

\begin{thm}
\label{thm:main}
Let $\mathcal{G}$ and $\mathcal{H}$ be graphs of bounded degree. Assume that there is a weak covering map $\pi : V(\mathcal{G})\to V(\mathcal{H})$ with tame fibres and the disjoint tree-lifting property.
If $p_c(\mathcal{G})<1$, then one has $p_c(\mathcal{G})<p_c(\mathcal{H})$.
\end{thm}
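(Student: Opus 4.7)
The plan is to combine a lifting coupling with an exploratory form of the Aizenman-Grimmett essential-enhancements machinery. I would introduce an intermediate enhanced percolation model on $\mathcal{H}$ depending on an auxiliary parameter $s \in [0,1]$ (with $s = 0$ recovering standard percolation), with critical parameter $p_c^{\mathrm{enh}}(\mathcal{H},s)$, and establish two estimates: $(a)$ $p_c(\mathcal{G}) \leq p_c^{\mathrm{enh}}(\mathcal{H},s)$ for every $s$, by a coupling that lifts the exploration of the enhanced cluster from $\mathcal{H}$ to $\mathcal{G}$, and $(b)$ $p_c^{\mathrm{enh}}(\mathcal{H},s) < p_c(\mathcal{H})$ for every small $s > 0$, by an essential-enhancements argument in the spirit of \cite{aizenmangrimmett}. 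Chaining the two at any fixed small $s > 0$ yields the desired $p_c(\mathcal{G}) < p_c(\mathcal{H})$.

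For $(a)$ I would generalise the Benjamini-Schramm lifting idea. Fix $o \in V(\mathcal{H})$, a lift $\tilde{o} \in V(\mathcal{G})$, and use the tame-fibre property to attach to each $v \in V(\mathcal{H})$ a pair of distinct lifts $\tilde{v},\tilde{v}' \in \pi^{-1}(v)$ with $d(\tilde{v},\tilde{v}') \leq R$, together with a fixed $\mathcal{G}$-geodesic $\sigma_v$ joining them. The enhancement consists of independent $\mathrm{Bernoulli}(s)$ bits $(\eta_v)_{v \in V(\mathcal{H})}$; when $\eta_v = 1$ and the $\mathcal{G}$-edges of $\sigma_v$ are open, the exploration is allowed to ``switch sheets'' from $\tilde{v}$ to $\tilde{v}'$ and resume from there. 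A joint exploration of the enhanced $\mathcal{H}$-cluster of $o$ and of its $\mathcal{G}$-lift can then be carried out edge by edge: each newly revealed $\mathcal{H}$-edge is lifted via the weak lifting property, and each enhancement-triggered switch is handled by the disjoint tree-lifting property, which lets the continuation from $\tilde{v}'$ proceed along a lift vertex-disjoint from the sub-tree already built. The outcome is a coupling in which the projection of the $\mathcal{G}$-cluster of $\tilde{o}$ contains the enhanced $\mathcal{H}$-cluster of $o$, so that the existence of an infinite enhanced $\mathcal{H}$-cluster forces the existence of an infinite $\mathcal{G}$-cluster; this is $(a)$.

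For $(b)$ I would follow the Aizenman-Grimmett strategy, deriving a differential inequality of the form $\partial_s \theta \geq c\,\partial_p \theta$ for a suitable cluster-size functional $\theta(p,s)$ on an appropriate region of parameters, by means of pivotal events adapted to the enhancement; integration then gives $p_c^{\mathrm{enh}}(\mathcal{H},s) < p_c(\mathcal{H})$ for any small $s > 0$. The essential-enhancement condition is satisfied here because $\pi(\sigma_v)$ is a non-trivial closed walk of bounded length based at $v$, so in suitable finite configurations the bit $\eta_v = 1$ can create a new connection that standard percolation cannot realize, producing the required pivotality; the exploratory version of the argument matches well the exploration used for $(a)$.

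The main obstacle is the coupling in $(a)$: one must expose the $\mathcal{G}$-randomness in an order that simultaneously makes the $\mathcal{H}$-edges probed during exploration i.i.d.\ $\mathrm{Bernoulli}(p)$, the $(\eta_v)$ independent $\mathrm{Bernoulli}(s)$, the two families independent, and every enhancement-triggered switch genuinely certified by an open $\mathcal{G}$-geodesic. The $\mathcal{G}$-edges consumed by the geodesics $\sigma_v$ may overlap with those already probed for regular exploration, and the branch launched from $\tilde{v}'$ may collide with the already-lifted tree; both issues must be handled by exposing $\mathcal{G}$-edges only at the moment the exploration actually needs them and by invoking disjoint tree-lifting to reroute the continuation through a vertex-disjoint lift whenever a collision would otherwise occur. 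Once the coupling is correctly set up, the essential-enhancement analysis, while technical, follows familiar lines.
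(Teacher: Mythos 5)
Your overall strategy is the same as the paper's: introduce an intermediate ``enhanced'' percolation on $\mathcal{H}$ with parameters $(p,s)$, couple the enhanced $\mathcal{H}$-cluster with (a subset of) the projection of the $\mathcal{G}$-cluster by lifting the exploration, and then run an Aizenman--Grimmett-type differential inequality to conclude that the enhanced model percolates at some $p<p_c(\mathcal{H})$. However, two of the ideas you identify as the crux are left unresolved, and the specific enhancement you describe does not seem to satisfy the essentiality condition you need.

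First, you never pin down the enhanced model as a model \emph{on $\mathcal{H}$ alone}: your ``switch sheets when $\eta_v=1$ and the $\mathcal{G}$-edges of $\sigma_v$ are open'' is a description of the coupling, not of an intrinsic $(\omega,\alpha)$-process on $\mathcal{H}$. The Aizenman--Grimmett machinery in step~$(b)$ has to be carried out purely on $\mathcal{H}$, and for that you need a concrete enhancement whose effect on the $\mathcal{H}$-cluster is explicit. Worse, your proposed reason for essentiality is incorrect: since $\pi(\sigma_v)$ is a \emph{closed} walk based at $v$, opening it (or requiring it to be open) never connects $v$ to a vertex it wasn't already connected to along that walk, so it cannot produce the $s$-pivotality you need. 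The paper instead takes the enhancement ``if $\alpha_u=1$ and all edges in $B_r(u)$ are open, then connect $u$ to $S_{r+1}(u)$'': this skips over the (possibly closed) edges of $S_{r+1/2}(u)$ and is manifestly essential on any bounded-degree graph. The role of tame fibres and disjoint tree-lifting is then to make this enhancement \emph{liftable}, via a lemma (Lemma~\ref{lempattern} in the paper) ensuring that whenever $B_r(\pi(x))$ is fully open, the set $Z(x,r)\subset\pi^{-1}(B_r(\pi(x)))$ reaches at least two preimages of every $u\in S_{r+1}(\pi(x))$.

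Second, you correctly name the main obstacle in the coupling --- that $\mathcal{G}$-edges already probed for the $p$-exploration may be needed again for the $s$-bonus --- but you do not resolve it. ``Exposing $\mathcal{G}$-edges only when needed and rerouting via disjoint lifts'' does not help, because once an edge has been revealed to be closed you cannot consistently re-sample it; disjoint tree-lifting gives you vertex-disjoint \emph{trees}, not a supply of fresh edges. The paper's solution is a multigraph trick: replace $\mathcal{G}$ and $\mathcal{H}$ by $\hat{\mathcal{G}}$ and $\hat{\mathcal{H}}$ with $M$ parallel copies of each edge at parameter $\hat p=1-(1-p)^{1/M}$, so that one copy is reserved for the $p$-exploration (all $M$ copies together) and each of the remaining copies can be spent once for an $s$-bonus, with a careful bookkeeping invariant (their Condition~(D)) ensuring no copy is used twice. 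The $s$-parameter in the coupling must then be chosen small (depending on $r$ and a lower bound $\varepsilon$ for $p$, roughly $s\leq \hat p^{\,|E(B_{3r}(x))|}$), which is also why your claim that $(a)$ holds ``for every $s$'' is too strong: the domination only holds for $s$ below a threshold depending on $p$, and the final theorem requires a careful ordering of the choices of $r$, $\varepsilon$, $s$ and $p$.
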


\begin{coro}
\label{coro:answer}
Let $\mathcal{G}$ be a graph. Let $G\neq \{1\}$ be a group acting on $V(\mathcal{G})$ by graph automorphisms, and let $\mathcal{H}$ denote the quotient graph $\mathcal{G}/G$.
Assume that the following conditions hold:
\begin{enumerate}
\item \label{freeness}the action $G\acts V(\mathcal{G})$ is free,
\item \label{gqtrans}$\mathcal{G}$ is quasi-transitive,
\item \label{hqtrans}$\mathcal{H}$ is quasi-transitive,
\item \label{pcnontriv}$p_c(\mathcal{G})<1$.
\end{enumerate}
Then one has $p_c(\mathcal{G})<p_c(\mathcal{H})$.
\end{coro}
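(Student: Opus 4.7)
The plan is to reduce Corollary~\ref{coro:answer} to Theorem~\ref{thm:main} by verifying, for the quotient map $\pi : V(\mathcal{G}) \to V(\mathcal{H})$, all of the structural hypotheses of the theorem. Once this is done, the conclusion $p_c(\mathcal{G}) < p_c(\mathcal{H})$ transfers immediately: hypothesis~\ref{pcnontriv} of the corollary is exactly the assumption $p_c(\mathcal{G}) < 1$ in Theorem~\ref{thm:main}, so nothing is lost in the reduction.

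For the bounded-degree condition, the combination of local finiteness (part of our standing convention on graphs) and quasi-transitivity is enough: a quasi-transitive graph has finitely many vertex orbits, each with a common finite degree, so its maximal degree is finite. This takes care of $\mathcal{G}$ via hypothesis~\ref{gqtrans} and of $\mathcal{H}$ via hypothesis~\ref{hqtrans}. To show that $\pi$ is a weak covering map, I would reproduce the short argument sketched in the introduction: the $1$-Lipschitz property is clear because two adjacent vertices of $\mathcal{G}$ map to equal or adjacent vertices, and the weak lifting property follows by unwinding the definition of $E(\mathcal{H})$: given $x \in V(\mathcal{G})$ and a neighbour $u$ of $\pi(x)$, pick any edge $\{a,b\} \in E(\mathcal{G})$ with $\pi(a) = \pi(x)$ and $\pi(b) = u$; then $a = g \cdot x$ for some $g \in G$, and $g^{-1} \cdot b$ is the desired neighbour of $x$ in the fibre over $u$. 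Tame fibres are then provided by Lemma~\ref{lem:tamefibres}, which invokes hypotheses \ref{freeness}--\ref{hqtrans}, and the disjoint tree-lifting property by Lemma~\ref{lem:disjointtrees}, which uses freeness (hypothesis~\ref{freeness}).

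The genuine content of the corollary thus lives entirely in Theorem~\ref{thm:main} and in the two auxiliary lemmas. The most delicate hypothesis to check is the tame-fibres property: freeness alone only guarantees that each $G$-orbit in $V(\mathcal{G})$ contains at least two points, and upgrading this to a \emph{uniform} bound $R$ on the distance from any vertex $x$ to the nearest other element of its $G$-orbit genuinely requires the quasi-transitivity assumptions on both $\mathcal{G}$ and $\mathcal{H}$, as the $G$-action need not be normalised by $\Aut(\mathcal{G})$ in a way that would let one bootstrap the pointwise finite distance to a uniform bound. Once Lemma~\ref{lem:tamefibres} and Lemma~\ref{lem:disjointtrees} are established, the proof of Corollary~\ref{coro:answer} reduces to a one-line application of Theorem~\ref{thm:main}.
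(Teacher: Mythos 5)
Your proposal is correct and takes essentially the same route as the paper: Section~\ref{sec:corotothm} derives Corollary~\ref{coro:answer} by applying Theorem~\ref{thm:main} once the structural hypotheses are verified via Lemma~\ref{lem:disjointtrees} (freeness gives disjoint tree-lifting) and Lemma~\ref{lem:tamefibres} (tame fibres from quasi-transitivity and non-injectivity). Your verification that $\pi$ is a weak covering map and that bounded degree follows from quasi-transitivity plus local finiteness matches the intended reduction.
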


\begin{rem}
By using the techniques of \cite{cantor}, one can deduce from Corollary~\ref{coro:answer} and  \cite[exercice p.~4]{hutchcroftlocality} that when $\mathcal{G}$ ranges over Cayley graphs of 3-solvable groups, $p_c(\mathcal{G})$ takes uncountably many values. Actually, one gets at least a Cantor set of such values. This is optimal in the following sense: there are only countably many 2-solvable finitely generated groups (see Corollary 3 in \cite{hall1954}), hence only countably many Cayley graphs of such groups. The same result without the solvability condition has been obtained previous to \cite{cantor} by Kozma \cite{kozmapersonal}, by working with graphs of the form $\mathcal{G}\star \mathcal{G}$. 
\end{rem}

Theorem~\ref{thm:main} yields a second corollary. Say that a map $\pi:V(\mathcal{G})\to V(\mathcal{H})$ is a \defini{strong covering map} if it is 1-Lipschitz for the graph distance and has the \defini{strong lifting property}: for every $x\in V(\mathcal{G})$, for every neighbour $u$ of $\pi(x)$, there is a \emph{unique} neighbour of $x$ that maps to $u$. {Recall that for many authors, the definition of a ``covering map'' is taken to be even stricter: a \defini{classical covering map} is a \emph{graph homomorphism} with the strong lifting property.}

{By} Theorem~\ref{thm:main} and Lemma~\ref{lem:strong}, the following result holds.

\begin{coro}
\label{coro:strong}
Let $\mathcal{G}$ and $\mathcal{H}$ be graphs of bounded degree. Assume that there is a strong covering map $\pi : V(\mathcal{G})\to V(\mathcal{H})$ with tame fibres.

If $p_c(\mathcal{G})<1$, then one has $p_c(\mathcal{G})<p_c(\mathcal{H})$.
\end{coro}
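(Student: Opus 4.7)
The plan is to deduce Corollary~\ref{coro:strong} directly from Theorem~\ref{thm:main}, treating Lemma~\ref{lem:strong} as the bridge. Since Theorem~\ref{thm:main} already does the heavy lifting (coupling $p$-percolation on $\mathcal{G}$ with enhanced percolation on $\mathcal{H}$ by lifting the cluster exploration, and then invoking the exploratory essential enhancements), all that remains is to verify its three hypotheses when $\pi$ is a strong covering map with tame fibres: bounded degree (given), weak covering with tame fibres, and disjoint tree-lifting.

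First I would note the trivial implication: every strong covering map is a weak covering map. Indeed, the strong lifting property asserts the \emph{existence and uniqueness} of a neighbour of $x$ above any given neighbour of $\pi(x)$; in particular such a neighbour exists, which is exactly the weak lifting property. The 1-Lipschitz condition and the tame fibres hypothesis are carried over unchanged.

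The real content is the disjoint tree-lifting property, which is what I expect Lemma~\ref{lem:strong} to provide. Given a subtree $\mathcal{T}\subset \mathcal{H}$ and two distinct vertices $x\neq y$ of $\mathcal{G}$ with $\pi(x)=\pi(y)=v\in V(\mathcal{T})$, I would construct the lift of $\mathcal{T}$ through $x$ (respectively through $y$) by induction on the distance in $\mathcal{T}$ from $v$: at each step, for a vertex $w$ already lifted to $\tilde w$ and an unexplored $\mathcal{T}$-neighbour $w'$ of $w$, the strong lifting property designates a \emph{unique} neighbour of $\tilde w$ above $w'$, which is taken to be $\tilde{w'}$. This produces subtrees $\mathcal{T}_x$ and $\mathcal{T}_y$ of $\mathcal{G}$, each mapped by $\pi$ isomorphically onto $\mathcal{T}$. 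If they shared a vertex $z$, then iterating the strong lifting property edge by edge along the unique $\mathcal{T}$-path from $\pi(z)$ to $v$, one obtains a unique lift of this path starting at $z$; but the construction forces this lift to end at $x$ when read inside $\mathcal{T}_x$ and at $y$ when read inside $\mathcal{T}_y$, whence $x=y$, a contradiction. Hence $V(\mathcal{T}_x)\cap V(\mathcal{T}_y)=\emptyset$.

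With these verifications in hand, Theorem~\ref{thm:main} applies to $\pi$ and, combined with $p_c(\mathcal{G})<1$, gives $p_c(\mathcal{G})<p_c(\mathcal{H})$. The only delicate point — and therefore the main (if modest) obstacle — is to make the uniqueness of path-lifting in a strong covering fully rigorous and to use it to rule out any vertex overlap between $\mathcal{T}_x$ and $\mathcal{T}_y$; everything else is bookkeeping on the definitions.
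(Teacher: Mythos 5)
Your proposal is correct and follows exactly the paper's route: observe that a strong covering map is a weak covering map, that the tame-fibres and bounded-degree hypotheses carry over, prove Lemma~\ref{lem:strong} (strong covering $\Rightarrow$ disjoint tree-lifting), and apply Theorem~\ref{thm:main}. Your verification of Lemma~\ref{lem:strong} via uniqueness of path-lifting from a shared vertex is the same idea the paper uses (the paper phrases it as the intersection $V(\mathcal{T}_x)\cap V(\mathcal{T}_y)$ being closed under $\mathcal{T}_x$-neighbours, hence all of $V(\mathcal{T}_x)$ by connectedness), so the two arguments are interchangeable.
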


We also have the following results regarding the value of $p_u$. 
Say that a weak covering map $\pi : V(\mathcal{G})\to V(\mathcal{H})$ has \defini{bounded fibres} if there is some $K$ such that
$$
\forall x,y\in V(\mathcal{G}),~\pi(x)=\pi(y)\implies d(x,y)\leq K.
$$
The following two theorems are, respectively, the $p_u$ counterparts of Theorem 1 from \cite{bspercobeyond} and Theorem \ref{thm:main} above.
\begin{thm}
	\label{thm:nonstrictpu}
	Let $\mathcal{G}$ and $\mathcal{H}$ be quasi-transitive graphs. Assume that there is a weak covering map $\pi : V(\mathcal{G})\to V(\mathcal{H})$ with bounded fibres. 
	
	Then one has $p_u(\mathcal{G})\leq p_u(\mathcal{H})$.
\end{thm}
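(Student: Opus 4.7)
The plan is to transfer a uniform lower bound on the two-point function from $\mathcal{H}$ to $\mathcal{G}$ via a refinement of the Benjamini--Schramm lifted exploration, and then to appeal to a known characterisation of $p_u$ for quasi-transitive graphs to deduce uniqueness on $\mathcal{G}$.

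First, since $\mathcal{H}$ is quasi-transitive and $p>p_u(\mathcal{H})$, Bernoulli-$p$ percolation on $\mathcal{H}$ has almost surely a unique infinite cluster, and the cluster density $v\mapsto \mathbb{P}_p[v\in C_\infty]$ is bounded below (over the finitely many orbits) by some $\theta^*(p)>0$. By FKG together with uniqueness, $\mathbb{P}_p[u\leftrightarrow v]\geq \theta^*(p)^2$ for all $u,v\in V(\mathcal{H})$; the same lower bound persists at any $p_1\in (p_u(\mathcal{H}),p)$.

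Next, I would split Bernoulli-$p$ percolation on $\mathcal{G}$ as $\omega=\omega_1\cup\omega_2$, where $\omega_1,\omega_2$ are independent Bernoulli configurations at parameters $p_1\in (p_u(\mathcal{H}),p)$ and $p_2=(p-p_1)/(1-p_1)\in(0,1)$, chosen so that $(1-p_1)(1-p_2)=1-p$. For given $x,y\in V(\mathcal{G})$, I would run on $\omega_1$ the lifted exploration of \cite{bspercobeyond} starting from $x$: this produces a subtree $T_x$ of the $\omega_1$-cluster of $x$ such that $\pi(T_x)$ is a spanning tree of the Bernoulli-$p_1$ cluster of $\pi(x)$ in $\mathcal{H}$. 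By the previous paragraph applied at $p_1$, the event $\{\pi(y)\in \pi(T_x)\}$ has probability at least $\alpha:=\theta^*(p_1)^2>0$, uniformly in $x,y$; on this event $T_x$ contains a vertex $y'$ with $\pi(y')=\pi(y)$, so that $x\leftrightarrow y'$ in $\omega_1\subseteq\omega$. To bridge from $y'$ to $y$, I would use the bounded fibres: after fixing once and for all a deterministic rule that assigns to every pair of vertices in the same fibre a $\mathcal{G}$-path of length at most $K$ joining them, write $\gamma=\gamma(y',y)$. Since $\gamma$ is $\omega_1$-measurable and $\omega_2$ is independent of $\omega_1$, the conditional probability that every edge of $\gamma$ is open in $\omega_2$ is at least $p_2^K$. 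Combining, $\mathbb{P}_p[x\leftrightarrow y]\geq \alpha\, p_2^K>0$, uniformly in $x,y\in V(\mathcal{G})$.

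Finally, I would appeal to the standard fact that for a quasi-transitive graph $\mathcal{G}$, the uniform lower bound $\inf_{x,y\in V(\mathcal{G})}\mathbb{P}_p[x\leftrightarrow y]>0$ implies $p\geq p_u(\mathcal{G})$; letting $p\searrow p_u(\mathcal{H})$ then yields $p_u(\mathcal{G})\leq p_u(\mathcal{H})$. The main technical input will be this last implication, the non-trivial direction of the two-point function characterisation of $p_u$, which ultimately relies on Schonmann's result that in the non-uniqueness phase $\mathbb{P}_p[x\leftrightarrow y]\to 0$ as $d(x,y)\to\infty$ for quasi-transitive graphs; the rest is a fairly direct modification of the Benjamini--Schramm lifting combined with the standard splitting trick for Bernoulli percolation.
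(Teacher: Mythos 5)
Your argument is essentially correct and reaches the conclusion, but you take a genuinely different route from the paper, and there is a citation problem you must fix.

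The paper never sprinkles. It lifts connections between \emph{balls}: by the Benjamini--Schramm coupling, $\Pp_p[\pi^{-1}(A)\leftrightarrow\pi^{-1}(B)]\geq\Pp_p[A\leftrightarrow B]$ for finite $A,B\subset V(\mathcal H)$; applied with $A=B_\ell(\pi(x))$, $B=B_\ell(\pi(y))$ and combined with the inclusion $\pi^{-1}(B_\ell(\pi(x)))\subset B_{\ell+K}(x)$ (which is where bounded fibres are used), this transfers the ball-to-ball criterion of Schonmann from $\mathcal H$ to $\mathcal G$ and concludes. You instead lift only the two-point function, then split $\omega=\omega_1\cup\omega_2$ and use $\omega_2$ to walk along a deterministic path of length at most $K$ from the lifted fibre vertex $y'$ to the actual target $y$. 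That is a perfectly workable elementary alternative to the ball-inclusion trick, and the uniform bound $\mathbb{P}_p[x\leftrightarrow y]\geq\theta^*(p_1)^2 p_2^K$ is correctly derived (one small imprecision: the lower bound you carry to $p_1$ is $\theta^*(p_1)^2$, not $\theta^*(p)^2$, but you use the right quantity when you define $\alpha$).

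The gap is in the last step. You invoke the ``standard fact'' that $\inf_{x,y}\mathbb{P}_p[x\leftrightarrow y]>0$ forces $p\geq p_u(\mathcal G)$ for quasi-transitive $\mathcal G$, and attribute the underlying decay result to Schonmann. Schonmann's theorem (the one the paper cites as \cite{Sch99}) gives only the ball-to-ball characterisation: uniqueness at $p$ is equivalent to $\lim_{\ell}\inf_{x,y}\Pp_p[B_\ell(x)\leftrightarrow B_\ell(y)]=1$. The point-to-point characterisation --- uniqueness at $p$ iff $\inf_{u,v}\Pp_p[u\leftrightarrow v]>0$ --- is strictly stronger and is not Schonmann's; for unimodular graphs it follows from Lyons--Schramm indistinguishability, and for general quasi-transitive graphs it is Tang's theorem \cite{tang}. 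The paper explicitly notes, in a remark after its $p_u$ argument, that Tang's result can be used to replace balls by single vertices in exactly the way you propose. So your proof is valid, but you must cite Tang (or restrict to unimodular graphs and cite Lyons--Schramm); as written, relying on Schonmann alone, the final implication does not follow. Alternatively, you could keep the sprinkling and upgrade your estimate to a ball-to-ball bound, but at that point the paper's argument without sprinkling is simply cleaner.
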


\begin{thm}
\label{thm:pu}
Let $\mathcal{G}$ and $\mathcal{H}$ be quasi-transitive graphs. Assume that there is a non-injective weak covering map $\pi : V(\mathcal{G})\to V(\mathcal{H})$ with bounded fibres and the disjoint tree-lifting property.

If $p_u(\mathcal{G})<1$, then one has $p_u(\mathcal{G})<p_u(\mathcal{H})$.
\end{thm}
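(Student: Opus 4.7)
The plan is to adapt the two-part architecture used for Theorem~\ref{thm:main} (exploration coupling in Section~\ref{sec:proofone}, essential enhancement in Section~\ref{sec:prooftwo}) to the $p_u$ setting. Non-injectivity of $\pi$ combined with disjoint tree-lifting plays the role of freeness, guaranteeing that certain edges of $\mathcal{H}$ admit several vertex-disjoint lifts in $\mathcal{G}$; the strengthening from tame to bounded fibres allows the coupling to transfer uniqueness of infinite clusters from $\mathcal{H}$ to $\mathcal{G}$ (as it does in the proof of Theorem~\ref{thm:nonstrictpu}) and keeps the local modifications used by the enhancement quantitatively controllable.

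First, I would build an exploration coupling between an enhanced Bernoulli percolation on $\mathcal{H}$ --- standard parameter $p$ together with an independent extra layer of intensity $s>0$ --- and Bernoulli $p$-percolation on $\mathcal{G}$. Running the exploration of a cluster on the $\mathcal{H}$-side, the weak lifting property provides a compatible $\mathcal{G}$-edge at each step that uses a standard $p$-edge, while the disjoint tree-lifting property allows an enhancement edge to be realised as an additional vertex-disjoint lift of the current subtree grafted onto the exploration, so that the overall configuration on $\mathcal{G}$ remains Bernoulli $p$. Bounded fibres are then used, as in Theorem~\ref{thm:nonstrictpu}, to merge different lifts of a given infinite $\mathcal{H}$-cluster into a single $\mathcal{G}$-cluster, ensuring that uniqueness of the infinite cluster in the enhanced $\mathcal{H}$-percolation transfers to uniqueness in $\mathcal{G}$; this yields $p_u(\mathcal{G}) \leq p_u^{\text{enh}}(\mathcal{H},s)$ for every $s>0$.

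Second, I would prove an exploratory essential-enhancement theorem asserting $p_u^{\text{enh}}(\mathcal{H},s)<p_u(\mathcal{H})$ for every $s>0$. The idea, following Aizenman-Grimmett, is to work with a monotone $p_u$-characterising observable --- a natural candidate on the quasi-transitive graph $\mathcal{H}$ being the uniform two-point-in-an-infinite-cluster probability $\Phi_{p,s}:=\inf_{x,y\in V(\mathcal{H})}\mathbb{P}_{p,s}(x\leftrightarrow y\text{ inside an infinite cluster})$, which is strictly positive above the uniqueness threshold of $(p,s)$-percolation and vanishes below. Applying Russo's formula to $\Phi_{p,s}$ and comparing the pivotal contribution of an enhancement edge with that of a standard Bernoulli edge should yield a differential inequality of the form $\partial_s\Phi_{p,s}\geq c\,\partial_p\Phi_{p,s}$; integrating along the critical curve $\{\Phi_{p,s}=0\}$, as in Aizenman-Grimmett, gives the strict separation of thresholds, and combined with step one this gives $p_u(\mathcal{G})<p_u(\mathcal{H})$.

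The main obstacle is the essential-enhancement step. For $p_c$, Aizenman-Grimmett apply Russo to the one-arm event, whose pivotals are easily bounded by cluster volume in a finite box; for $p_u$ the natural observable involves two distant vertices, and bounding the pivotals uniformly requires finer tools such as the Lyons-Schramm indistinguishability of infinite clusters and uniform two-arm estimates. Moreover, one has to check that the enhancement is indeed essential, i.e.\ that a single enhancement edge has uniformly positive probability of merging two infinite clusters into one; this is where the hypothesis $p_u(\mathcal{G})<1$ enters, in order to anchor the integration of the differential inequality at a non-degenerate endpoint.
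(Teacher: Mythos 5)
Your first part (exploration coupling for $(p,s)$-percolation on $\mathcal{H}$ versus $p$-percolation on $\mathcal{G}$, with bounded fibres controlling how far one must go in $\mathcal{G}$ to cover a ball of $\mathcal{H}$) is essentially the paper's Proposition~\ref{proponepu}, so that part is aligned. The gap is in the second part, the enhancement step for $p_u$.

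You propose to apply Russo's formula to a global $p_u$-characterising observable $\Phi_{p,s}:=\inf_{x,y}\mathbb{P}_{p,s}(x\leftrightarrow y\text{ inside an infinite cluster})$ and to ``integrate along the critical curve $\{\Phi_{p,s}=0\}$.'' This does not work as stated: $\Phi_{p,s}$ is an infimum over an infinite family and so is not itself a probability to which Russo applies; each individual event ``$x\leftrightarrow y$ inside an infinite cluster'' depends on infinitely many edges, so Russo's formula would first have to be replaced by finite-volume approximations; and $\{\Phi_{p,s}=0\}$ is not a curve but (generically) the boundary of a two-dimensional region, so the proposed integration scheme is not well defined. You flag these obstacles yourself and suggest importing Lyons--Schramm indistinguishability and uniform two-arm estimates, but you do not explain how they would close the argument, and none of this machinery is in fact needed. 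The paper sidesteps all of it: the Aizenman--Grimmett differential inequality is run on the \emph{finite-volume} two-point events $\mathcal{E}^{A,B}_L$ (finite sets $A$, $B$, everything inside $B_L(o)$), which are local increasing events where Russo applies without issue, yielding $\mathbb{P}_{p,s}[A\leadsto B]\geq \mathbb{P}_{p+\delta}[A\leftrightarrow B]$ uniformly for $A,B$ satisfying a mild geometric separation and $B$ ``$r$-nice'' (Proposition~\ref{proptwopu}, Lemma~\ref{lem:proptwopu}). This is then combined with Schonmann's ball-ball criterion $p>p_u \iff \lim_\ell\inf_{u,v}\mathbb{P}_p[B_\ell(u)\leftrightarrow B_\ell(v)]=1$ from~\cite{Sch99}, which turns two-point estimates into a $p_u$ statement with no indistinguishability or arm-exponent input. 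Also note that the hypothesis $p_u(\mathcal{G})<1$ is used only to dispose of the trivial case $p_u(\mathcal{H})=1$, not to anchor any integration. You should replace your second step by: (i) the finite-volume enhancement inequality for $\mathcal{E}^{A,B}_L$ with a pivotal-swap lemma adapted to two endpoints $A,B$ rather than the origin and a sphere, and (ii) Schonmann's criterion to convert the resulting ball-ball estimates on $\mathcal{G}$ at some $p<p_u(\mathcal{H})$ into $p\geq p_u(\mathcal{G})$.
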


The next corollary follows from Theorem~\ref{thm:pu} and Lemma~\ref{lem:bounded}.

\begin{coro}
\label{coro:answerpu}
Let $\mathcal{G}$ be a graph. Let $G\neq \{1\}$ be a \emph{finite} group acting on $V(\mathcal{G})$ by graph automorphisms, and let $\mathcal{H}$ denote the quotient graph $\mathcal{G}/G$.
Assume that the following conditions hold:
\begin{enumerate}
\item \label{freenesspu}the action $G\acts V(\mathcal{G})$ is free,
\item \label{gqtranspu}$\mathcal{G}$ is quasi-transitive,
\item \label{hqtranspu}$\mathcal{H}$ is quasi-transitive,
\item \label{pcnontrivpu}$p_u(\mathcal{G})<1$.
\end{enumerate}
Then one has $p_u(\mathcal{G})<p_u(\mathcal{H})$.
\end{coro}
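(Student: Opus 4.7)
The plan is to deduce Corollary~\ref{coro:answerpu} from Theorem~\ref{thm:pu} by verifying that the quotient map $\pi : V(\mathcal{G}) \to V(\mathcal{H})$ defined by $\pi(x) := Gx$ satisfies all the required hypotheses. Most of the work consists in invoking facts established elsewhere in the paper; the only real content lies in checking bounded fibres, which is where the finiteness of $G$ is crucial.

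First, I would recall from the discussion preceding Theorem~\ref{thm:main} that $\pi$ is automatically a weak covering map, with no use of freeness, quasi-transitivity, or finiteness of $G$. Next, I would check that $\pi$ is non-injective: since $G \neq \{1\}$, pick $g \neq 1$; by freeness (condition~\ref{freenesspu}), for any $x \in V(\mathcal{G})$ one has $gx \neq x$ while $\pi(gx) = \pi(x)$. The disjoint tree-lifting property for $\pi$ follows from Lemma~\ref{lem:disjointtrees} upon freeness. Quasi-transitivity of $\mathcal{G}$ and $\mathcal{H}$ are given by conditions~\ref{gqtranspu} and~\ref{hqtranspu}, and $p_u(\mathcal{G}) < 1$ is condition~\ref{pcnontrivpu}.

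The remaining hypothesis to verify is that $\pi$ has \emph{bounded} fibres, not merely tame ones. This is precisely Lemma~\ref{lem:bounded}, whose proof I would sketch as follows. Since $G$ acts freely, each fibre $\pi^{-1}(\pi(x)) = Gx$ has exactly $|G|$ elements, hence finite diameter for each individual $x$. To upgrade this to a uniform bound, I would use quasi-transitivity of $\mathcal{G}$: pick finitely many orbit representatives $x_1, \ldots, x_n$ under $\mathrm{Aut}(\mathcal{G})$, and set $K := \max_i \mathrm{diam}(Gx_i)$. Since elements of $\mathrm{Aut}(\mathcal{G})$ are isometries and since the diameter of a fibre is invariant under $\mathrm{Aut}(\mathcal{G})$, this gives $d(x,y) \leq K$ whenever $\pi(x)=\pi(y)$. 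Note that, in contrast to the tame fibres condition (Lemma~\ref{lem:tamefibres}) used for Corollary~\ref{coro:answer}, the bounded fibres condition genuinely requires $|G| < \infty$: an infinite $G$ acting freely on a bounded-degree graph necessarily produces unbounded orbits.

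Once all four hypotheses of Theorem~\ref{thm:pu} are in place, the conclusion $p_u(\mathcal{G}) < p_u(\mathcal{H})$ is immediate. The main obstacle in this argument is essentially nonexistent on the level of the corollary itself, since all the serious work has been done in Theorem~\ref{thm:pu}; the only subtlety worth highlighting is the distinction between tame and bounded fibres, which explains why the $p_u$ statement is restricted to finite groups while the $p_c$ statement is not.
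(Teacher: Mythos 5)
Your overall strategy is right: verify the hypotheses of Theorem~\ref{thm:pu} for the quotient map $\pi : x \mapsto Gx$. Non-injectivity, the weak covering property, and disjoint tree-lifting (via Lemma~\ref{lem:disjointtrees}) are all handled correctly. However, your verification of \emph{bounded fibres} has a genuine gap. You assert that ``the diameter of a fibre is invariant under $\Aut(\mathcal{G})$,'' but this is not automatic. For $\varphi \in \Aut(\mathcal{G})$, the fibre through $\varphi(x)$ is $G\varphi(x)$, whereas $\varphi(Gx) = (\varphi G \varphi^{-1})\varphi(x)$; these coincide only when $\varphi$ normalises $G$. Since $G$ is merely some subgroup of $\Aut(\mathcal{G})$ and is not assumed normal in any quasi-transitive subgroup (the paper's remark after Question~\ref{quest:ours} singles this out as a \emph{special case}), there is no a priori reason that $x \mapsto \diam(Gx)$ is constant on $\Aut(\mathcal{G})$-orbits. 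Concretely, for $\mathcal{G}=\Z$ and $\sigma(x)=1-x$, the orbit diameters $|2x-1|$ are unbounded even though all vertices lie in a single $\Aut(\Z)$-orbit — so the invariance claim is simply false in general (this example fails other hypotheses, but it shows your step is unjustified).

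The paper's actual route through Lemma~\ref{lem:bounded} is more subtle precisely because of this: it does not prove that the original quotient map $\pi$ has bounded fibres. Instead, using that fibres have bounded \emph{cardinality} (namely $|G|$, via freeness and finiteness of $G$), it produces a possibly different map $\pi_\star : V(\mathcal{G}) \to V(\mathcal{H})$ — built as a pointwise limit of the conjugated relations $x\,\mathcal{R}_n\,y \iff \pi(\varphi_n x)=\pi(\varphi_n y)$, iterated to decrease the maximal fibre cardinality — which is again a non-injective weak covering map with disjoint tree-lifting and, this time, bounded (hence tame) fibres. One then applies Theorem~\ref{thm:pu} to $\pi_\star$ rather than to $\pi$. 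Your write-up needs to replace the invariance argument with this compactness/replacement argument, or at least invoke Lemma~\ref{lem:bounded} as a black box while acknowledging that the map fed to Theorem~\ref{thm:pu} may not be the original quotient map.
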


\begin{coro}
\label{coro:strongpu}
Let $\mathcal{G}$ and $\mathcal{H}$ be quasi-transitive graphs. Assume that there is a non-injective strong covering map $\pi : V(\mathcal{G})\to V(\mathcal{H})$ with bounded fibres. 

If $p_u(\mathcal{G})<1$, then one has $p_u(\mathcal{G})<p_u(\mathcal{H})$.
\end{coro}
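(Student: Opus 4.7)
The plan is to deduce Corollary~\ref{coro:strongpu} directly from Theorem~\ref{thm:pu}, in complete analogy with how Corollary~\ref{coro:strong} was deduced from Theorem~\ref{thm:main}. A strong covering map is automatically a weak covering map, and the hypotheses of non-injectivity, bounded fibres, quasi-transitivity of $\mathcal{G}$ and $\mathcal{H}$, and $p_u(\mathcal{G})<1$ transfer unchanged. The only point requiring attention is that a strong covering map automatically enjoys the disjoint tree-lifting property; this is exactly what the invoked Lemma~\ref{lem:strong} supplies, and the bulk of the argument consists of establishing it.

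The key intermediate fact I would prove is a \emph{unique path lifting} property for strong covering maps: for any $z\in V(\mathcal{G})$ and any walk $v_0=\pi(z),v_1,\ldots,v_n$ in $\mathcal{H}$, there is a unique walk $z_0=z,z_1,\ldots,z_n$ in $\mathcal{G}$ with $\pi(z_i)=v_i$. This follows by induction: at each step, the strong lifting property furnishes a unique neighbour of $z_i$ mapping to $v_{i+1}$. From this I would derive that, for any subtree $\mathcal{T}\subset \mathcal{H}$ and any $x$ in the fibre over $V(\mathcal{T})$, there is a \emph{unique} lift of $\mathcal{T}$ containing $x$: one defines, for each $v\in V(\mathcal{T})$, its lift $\tilde v$ as the endpoint of the unique lift (starting at $x$) of the unique $\mathcal{T}$-geodesic from $\pi(x)$ to $v$. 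The fact that $\mathcal{T}$ is a tree guarantees that this assignment is consistent across edges and produces a subtree $\mathcal{T}_x$ that is a lift of $\mathcal{T}$.

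The disjoint tree-lifting property then follows by a short argument by contradiction. Given distinct $x,y\in V(\mathcal{G})$ with $\pi(x)=\pi(y)\in V(\mathcal{T})$, I would form the unique lifts $\mathcal{T}_x$ and $\mathcal{T}_y$; if they shared a vertex $z$, then lifting the $\mathcal{T}$-geodesic from $\pi(z)$ to $\pi(x)=\pi(y)$ starting at $z$ would have to terminate at both $x$ (by being inside $\mathcal{T}_x$) and $y$ (by being inside $\mathcal{T}_y$), contradicting uniqueness of the lift. Once disjoint tree-lifting is secured, Theorem~\ref{thm:pu} applies verbatim and delivers $p_u(\mathcal{G})<p_u(\mathcal{H})$. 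Since all the serious probabilistic and combinatorial work is packaged into Theorem~\ref{thm:pu}, there is no real obstacle here; the only genuine content is the covering-space-style unique lifting argument sketched above, the subtlest point of which is simply verifying that the proposed vertex set $\{\tilde v:v\in V(\mathcal{T})\}$, equipped with the lifted edges, is actually injective onto $V(\mathcal{T})$ and thus yields a genuine subtree isomorphic to $\mathcal{T}$.
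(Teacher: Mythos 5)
Your proposal is correct and follows the same route as the paper: reduce to the disjoint tree-lifting property via Lemma~\ref{lem:strong} and then invoke Theorem~\ref{thm:pu}, noting that a strong covering map is a weak one and all remaining hypotheses transfer verbatim. Your proof of Lemma~\ref{lem:strong} via unique path lifting and lifting a $\mathcal{T}$-geodesic is a mild rephrasing of the paper's argument, which instead propagates the vertex intersection $V(\mathcal{T}_x)\cap V(\mathcal{T}_y)$ along edges using the strong lifting property at each step; both exploit exactly the same uniqueness.
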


Our proofs can be made explicit in that they actually yield quantitative (but poor) lower bounds on the differences $p_c(\mathcal{H})-p_c(\mathcal{G})$ and $p_u(\mathcal{H})-p_u(\mathcal{G})$.

\paragraph{Structure of the paper} The remaining of the paper is organised as follows.
Section~\ref{sec:hyp} discusses the hypotheses of our results. Section~\ref{sec:proofmain} exposes the strategy of proof of Theorem~\ref{thm:main}, reducing it to the proof of Propositions~\ref{propone} and \ref{proptwo}. They are respectively established in Sections~\ref{sec:proofone} and \ref{sec:prooftwo}. Section~\ref{sec:corotothm} explains why the corollaries follow from the theorems. Finally, Section~\ref{sec:pu} is devoted to the proof of Theorems~\ref{thm:nonstrictpu} and \ref{thm:pu}.

\section{On the hypotheses of our results}
\label{sec:hyp}

None of the four assumptions of Corollary~\ref{coro:answer} can be removed.
This is clear for Hypothesis~\ref{pcnontriv}. For Hypothesis~\ref{freeness}, take $\mathcal{G}_0$ to be a quasi-transitive graph with $p_c<1$ --- e.g. the square lattice or the 3-regular tree --- and define $\mathcal{G}$ by setting: $$V(\mathcal{G}):=V(\mathcal{G}_0)\times \{0,1,2\},~~E(\mathcal{G}):=\left(E(\mathcal{G}_0)\times\{0\}\right)\cup \left(V(\mathcal{G})\times\{\{0,1\},\{0,2\}\}\right),$$
where one identifies $E(\mathcal{G}_0)\times\{0\}$ with $\{\{(x,0),(y,0)\}:\{x,y\}\in E(\mathcal{G}_0)\}$ and $V(\mathcal{G})\times\{\{0,1\},\{0,2\}\}$ with $\{\{(x,0),(x,i)\}:x\in V(\mathcal{G}_0),~i\in\{1,2\}\}$. It suffices then to take $G:= \Z/2\Z$ with the non-trivial element acting via $(x,i)\mapsto (x,\sigma(i))$, where $\sigma$ is the $(12)$-transposition. Notice that if $\mathcal{G}_0$ is taken to be amenable (e.g. the square lattice), then $p_u(\mathcal{G})=p_c(\mathcal{G})$ and $p_u(\mathcal{H})=p_c(\mathcal{H})$, so that Hypothesis~\ref{freenesspu} is also necessary in Corollary~\ref{coro:answerpu}. See \cite{burtonkeane}.

For Hypothesis~\ref{gqtrans}, take once again $\mathcal{G}_0$ to be a quasi-transitive graph with $p_c<1$, and pick some vertex $o$ in $\mathcal{G}_0$. The graph $\mathcal{G}$ is defined by taking two disjoint copies of $\mathcal{G}_0$ and putting an additional edge between the two copies of $o$. The group $G:=\Z/2\Z$ acts by swapping copies.

As for Hypothesis~\ref{hqtrans}, take $\mathcal{G}$ to be the square lattice $\Z^2$, and $G$ to be $\Z/2\Z$ acting via the reflection $(x,y) \mapsto (x,1-y)$.

\saut

Still, we do not know what happens if freeness is relaxed to the absence of trivial $G$-orbit.

\begin{question}
\label{quest:ours}
Let $\mathcal{G}$ be a graph. Let $G$ be a group acting on $V(\mathcal{G})$, and let $\mathcal{H}$ denote the quotient graph $\mathcal{G}/G$.
Assume that the following conditions hold:
\begin{itemize}
\item $\forall x\in V(\mathcal{G}),~\exists g\in G,~gx\not=x$,
\item $\mathcal{G}$ is quasi-transitive,
\item $\mathcal{H}$ is quasi-transitive,
\end{itemize}
Is it necessarily the case that $p_c(\mathcal{G})<1$ implies $p_c(\mathcal{G})<p_c(\mathcal{H})$? If we assume further that $G$ is finite, is it necessarily the case that $p_u(\mathcal{G})<1$ implies $p_u(\mathcal{G})<p_u(\mathcal{H})$?
\end{question}
\begin{rem}
An interesting particular case (which we also do not know how to solve) is when $G$ is normal in a quasi-transitive subgroup of $\mathsf{Aut}(\mathcal{G})$. In that  setting, $\mathcal{H}$ is automatically quasi-transitive, and the map $\pi$ always has tame fibres.
\end{rem}

As for Theorem~\ref{thm:main} and Corollary~\ref{coro:strong}, notice that the assumption that fibres are tame cannot be replaced by non-triviality of the fibres {(namely $\forall u\in V(\mathcal{H}),~|\pi^{-1}(\{u\})|\not=1$), even if $\pi$ is taken to be a \emph{classical} covering map}. Indeed, take $\mathcal{H}$ to be a graph with bounded degree and $p_c<1$, and pick some edge $e$ in $\mathcal{H}$. To define $\mathcal{G}$, start with two copies of $\mathcal{H}$, and denote by $\{x,y\}$ and $\{x',y'\}$ the two copies of $e$. Then replace these two edges by $\{x',y\}$ and $\{x,y'\}$, thus yielding a connected graph. Take $\pi$ to be the natural projection from $\mathcal{G}$ to $\mathcal{H}$.

\saut

We do not know how to answer the following question, which investigates a generalisation of Theorem~\ref{thm:main}/Corollary~\ref{coro:strong}.

\begin{question}
\label{quest:ourstwo}
Let $\mathcal{G}$ and $\mathcal{H}$ be graphs of bounded degree. Assume that there is a weak covering map $\pi : V(\mathcal{G})\to V(\mathcal{H})$ with tame fibres.

If $p_c(\mathcal{G})<1$, is it necessarily the case that $p_c(\mathcal{G})<p_c(\mathcal{H})$?
\end{question}

\begin{question}
\label{quest:ourstwo}
Let $\mathcal{G}$ and $\mathcal{H}$ be quasi-transitive graphs. Assume that there is a weak covering map $\pi : V(\mathcal{G})\to V(\mathcal{H})$ with tame and bounded fibres.

If $p_u(\mathcal{G})<1$, is it necessarily the case that $p_u(\mathcal{G})<p_u(\mathcal{H})$?
\end{question}

\saut

Finally, notice that one cannot remove the finiteness assumption from Theorem~\ref{thm:pu} and its corollaries. Indeed, without this assumption, it is even possible to have the strict inequality in the \emph{reverse} direction. The following example shows that this is easy to obtain if one further relaxes the assumption that $p_u(\mathcal{G})<1$: if one takes $\mathcal{G}$ to be the $2d$-regular tree and $\mathcal{H}$ to be the $d$-dimensional hypercubic lattice, for some $d\geq 2$, then we have $$p_u(\mathcal{H})=p_c(\mathcal{H})<1=p_u(\mathcal{G}).$$
If one does not want to relax the assumption that $p_u(\mathcal{G})<1$, one can take $d$ to be large enough, $\mathcal{G}_d$ to be the product of the $2d$-regular tree and the bi-infinite line, and $\mathcal{H}_d$ to be the $(d+1)$-dimensional hypercubic lattice. Indeed, $p_u(\mathcal{G}_d)\sim \frac{1}{\sqrt{d}}$ but $p_u(\mathcal{H}_d)=p_c(\mathcal{H}_d)\sim \frac{1}{2d}$, see respectively \cite{GN90} and \cite{Kes90}.

\section{Proof of Theorem~\ref{thm:main}}
\label{sec:proofmain}

Let $\mathcal{G}$, $\mathcal{H}$ and $\pi$ be as in Theorem~\ref{thm:main}.
Let $r$ be a positive integer. Pick a root $o$ in $\mathcal{H}$, and some $o'\in \pi^{-1}(\{o\})$.

\paragraph{Notation} Given a graph $(V,E)$, the ball of centre $x$ and radius $r$ is $B_r(x):=\{y\in V:d(x,y)\leq r\}$. It is considered as a set of vertices, but it may also be considered as a graph --- with the structure the ambient graph induces on it. For $r\in \N$, the sphere of centre $x$ and radius $r$ is $S_r(x):=\{y\in V:d(x,y)=r\}$. We also set $S_{r+\frac{1}{2}}(x):=\{e\in E: e\cap S_r(x)\not=\varnothing\text{ and }e\cap S_{r+1}(x)\not=\varnothing\}$.

\vspace{0.3cm}

We are going to construct a random subset $\mathcal{C}_0$ of $V(\mathcal{H})$ which will be a ``strict enhancement'' of the cluster of $o$ in a $p$-percolation model on $\mathcal{H}$. Given a configuration $(\omega,\alpha)\in \{0,1\}^{E(\mathcal{H})}\times\{0,1\}^{V(\mathcal{H})}$, we define inductively a sequence $(C_n)_{n\geq 0}$ of subsets of $V(\mathcal{H})$ as follows. We sometimes identify $\omega$ with the subset of edges $\{e:~\omega_e=1\}$ or the subgraph of $\mathcal{H}$ associated with it. Set $C_0 :=\{o\}$. For $n\geq 0$, let $C_{2n+1}$ be the union of the $\omega$-clusters of the vertices of $C_{2n}$. Then let $C_{2n+2}$ be the union of $C_{2n+1}$ and the vertices $v$ such that there is some $u\in C_{2n+1}$ satisfying the following conditions:
\begin{enumerate}
	\item $d(u,v)=r+1$,
	\item $\omega_e=1$ for all edges $e$ in $B_r(u)$,
	\item $\alpha_u=1$.
\end{enumerate}
The sequence of sets $(C_n)$ is non-decreasing, and we define $\mathcal{C}_o=\mathcal{C}_o(\omega,\alpha):= \bigcup_n C_n$. Given $p,s\in [0,1]$, the distribution of the random variable $\mathcal{C}_{o}(\omega,\alpha)$ under the probability measure $\mathbb{P}_{p,s}:=\mathsf{Ber}(p)^{\otimes E(\mathcal{H})}\otimes \mathsf{Ber}(s)^{\otimes V(\mathcal{H})}$ is denoted by $\mathcal{C}_{\mathcal{H}}^{p,s}(o)$. In a similar way, we can define $\mathcal{C}_{A}=\mathcal{C}_{A}(\omega,\alpha)$ --- and its distribution under $\mathbb{P}_{p,s}$, denoted by $\mathcal{C}_{\mathcal{H}}^{p,s}(A)$ --- by considering the same process but initialising it with $C_0=A$.
We also set $\mathcal{C}_\mathcal{G}^p(A)$ to be the distribution of the cluster of $A$ in bond percolation of parameter $p$ on $\mathcal{G}$.
\begin{rem}
Note that $\mathcal{C}_o(\omega,\alpha)$ does not coincide with the cluster of $o$ for the following model: declare an edge $e$ to be open if ``$e$ is $\omega$-open or there is a vertex $u$ such that $e\in S_{r+\frac{1}{2}}(u)$, all the edges in $B_r(u)$ are $\omega$-open and $\alpha_u=1$''. This would be an instance of the \textit{classical} enhancement introduced by Aizenman and Grimmett --- see \cite{aizenmangrimmett}. Indeed, the model we consider here is an \textit{exploratory version} of their model, and the former is stochastically dominated by the latter. For example, in our model the assertion $v\in \mathcal{C}_u(\omega,\alpha)$ does not necessarily imply $u\in \mathcal{C}_v(\omega,\alpha)$.
\end{rem}

We will prove the following two propositions. The proof of Proposition~\ref{propone} proceeds by lifting some exploration process from $\mathcal{H}$ to $\mathcal{G}$: in that, it is similar to the proof of Theorem~1 of \cite{bspercobeyond}. The proof of Proposition~\ref{proptwo} uses an exploratory variation of the techniques of Aizenman and Grimmett \cite{aizenmangrimmett}. Even though essential enhancements are delicate in general \cite{essrev}, it turns out that our particular enhancement \emph{can} be handled for general graphs, even for site percolation.

\begin{prop}
\label{propone}There is a choice of $r\geq 1$ such that the following holds: for every $\varepsilon >0$, there is some $s\in (0,1)$ such that for every $p\in [\varepsilon, 1]$, $\mathcal{C}_{\mathcal{H}}^{p,s}(o)$ is stochastically dominated\footnote{There is a coupling such that the $(\mathcal{H},p,s)$-cluster is a subset of the $\pi$-image of the $(\mathcal{G},p)$-cluster.} by $\pi\left(\mathcal{C}_{\mathcal{G}}^{p}(o')\right)$.
\end{prop}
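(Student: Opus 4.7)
The plan is to build the coupling as a sequential exploration of the enhanced cluster of $o$ in $\mathcal{H}$, reading off each needed $\omega$-bit or $\alpha$-bit from freshly revealed $\eta$-edges on $\mathcal{G}$. Concretely, I would grow a subset $C\subset V(\mathcal{H})$ together with an injective map $\phi:C\to V(\mathcal{G})$ satisfying $\pi\circ\phi=\mathrm{id}$, initialised by $C=\{o\}$, $\phi(o)=o'$, and maintain throughout the invariant $\phi(C)\subset \mathcal{C}_{o'}(\eta)$, where $\eta$ denotes the $\mathcal{G}$-percolation. The standard percolation moves follow the Benjamini--Schramm recipe: to reveal $\omega_{\{u,w\}}$ for a boundary edge ($u\in C$, $w\notin C$), pick a neighbour $w''\in\pi^{-1}(w)$ of $\phi(u)$ via weak lifting, set $\omega_{\{u,w\}}:=\eta_{\{\phi(u),w''\}}$, and add $w$ to $C$ with $\phi(w):=w''$ if the edge is open. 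Because $\phi$ stays injective, distinct $\mathcal{H}$-edges revealed this way correspond to distinct $\mathcal{G}$-edges, so the revealed $\omega_e$'s are iid $\mathrm{Ber}(p)$.

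\textbf{Enhancement moves.}
Fix any $r\ge 1$. For each $u\in C$ whose $r$-ball $B_r(u)\subset \mathcal{H}$ has been fully explored with all its edges $\omega$-open, I would determine $\alpha_u$ using tame fibres and disjoint tree-lifting. Tame fibres furnish a second preimage $y_u\in\pi^{-1}(u)\setminus\{\phi(u)\}$ with $d(\phi(u),y_u)\le R$; disjoint tree-lifting applied to a spanning tree $\mathcal{T}_u$ of $B_{r+1}(u)\subset\mathcal{H}$ yields two vertex-disjoint lifts $\tilde{\mathcal{T}}_u,\tilde{\mathcal{T}}_u'$ rooted respectively at $\phi(u)$ and $y_u$. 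Let $F_u$ be the $\eta$-event that every edge on a fixed shortest $\mathcal{G}$-path from $\phi(u)$ to $y_u$, together with every edge of $\tilde{\mathcal{T}}_u'$, is open. Since $\mathcal{G}$ has bounded degree and these structures have size bounded in terms of $r$, $R$, and $\Delta:=\max\deg\mathcal{G}$ only, one has $\mathbb{P}_p(F_u)\ge \varepsilon^N$ for a constant $N=N(r,R,\Delta)$ uniformly in $p\in[\varepsilon,1]$. Set $s:=\varepsilon^N$ and $\alpha_u:=\mathbf{1}_{F_u}\cdot X_u$ with $X_u\sim\mathrm{Ber}(s/\mathbb{P}_p(F_u))$ independent of everything else, so that $\alpha_u\sim\mathrm{Ber}(s)$. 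Whenever $\alpha_u=1$, the event $F_u$ forces $y_u$ to be $\eta$-connected to $\phi(u)$ and, via $\tilde{\mathcal{T}}_u'$, to a preimage of every vertex in $S_{r+1}(u)$; thus we may add $S_{r+1}(u)$ to $C$ (extending $\phi$ through the second lift) while preserving the invariant.

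\textbf{Main obstacle.}
The core bookkeeping difficulty---the hardest part of the argument---is to ensure that the collections of $\eta$-edges used across all revealed $\omega$'s and $\alpha$'s are pairwise disjoint, so that the coupled $(\omega,\alpha)$ truly has the iid product law $\mathbb{P}_{p,s}$. Disjoint tree-lifting does the heavy lifting at a single $u$: the lifts $\tilde{\mathcal{T}}_u,\tilde{\mathcal{T}}_u'$ being vertex-disjoint rules out any overlap between the $\omega$-edges (living in $\tilde{\mathcal{T}}_u$) and the $F_u$-edges (living in $\tilde{\mathcal{T}}_u'$). Cross-terms between distinct vertices, and between the connecting $\mathcal{G}$-path $\phi(u)\to y_u$ and nearby structures, must also be ruled out; this is handled by processing the exploration in a definite order and, when needed, enlarging the subtree to which disjoint tree-lifting is applied so that the connecting path is absorbed into $\tilde{\mathcal{T}}_u'$. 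Any $\omega_e$ or $\alpha_u$ never touched by the exploration is filled in with fresh independent Bernoullis. Once the disjointness is checked, the inclusion $\mathcal{C}_o(\omega,\alpha)\subset\pi(\mathcal{C}_{o'}(\eta))$ holds by construction, proving the proposition.
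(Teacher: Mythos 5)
Your proposal correctly identifies the overall strategy of the paper: lift the exploration of the enhanced cluster from $\mathcal{H}$ to $\mathcal{G}$, and decide $\alpha_u$ by revealing extra $\eta$-edges over a second lift supplied by tame fibres and disjoint tree-lifting. You also correctly isolate the central difficulty, namely that the $\eta$-edges consumed across all $\omega$-queries and $\alpha$-queries must remain pairwise disjoint for the coupled $(\omega,\alpha)$ to have the product law $\mathbb{P}_{p,s}$. But the proposal does not actually resolve this difficulty, and the two devices you invoke do not suffice. First, the assertion that ``the $\omega$-edges live in $\tilde{\mathcal{T}}_u$'' is not true: the $p$-exploration builds $\phi(C)$ incrementally following the exploration order and has no reason to use the edges of a tree chosen later, nor even to satisfy $\phi(B_r(u))\subset V(\tilde{\mathcal{T}}_u)$. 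Second, and more seriously, cross-terms between different $u$'s cannot be ``ruled out by processing in a definite order'': two vertices $u_1$ and $u_2$ with $d(u_1,u_2)\le 2r+2$ may both require a bonus, their lifts $\tilde{\mathcal{T}}_{u_1}'$ and $\tilde{\mathcal{T}}_{u_2}'$ may share $\mathcal{G}$-edges, and disjoint tree-lifting only provides two lifts disjoint from \emph{each other}, not a lift avoiding an arbitrary pre-existing set of explored edges. Third, the connecting $\mathcal{G}$-path from $\phi(u)$ to $y_u$ is a genuine $\mathcal{G}$-path, not the lift of an $\mathcal{H}$-tree, so ``enlarging the subtree so that the connecting path is absorbed into $\tilde{\mathcal{T}}_u'$'' is not a well-defined operation; the path can pass through vertices of both lifts and through edges already $p$-explored.

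The ingredient you are missing is the paper's multigraph trick. One replaces $\mathcal{G}$ by $\hat{\mathcal{G}}$ with $M$ parallel copies of each edge and runs Bernoulli percolation of parameter $\hat p = 1-(1-p)^{1/M}$ on $\hat{\mathcal{G}}$, so that the OR over the $M$ copies has law $\mathsf{Ber}(p)$. Each $p$-query of an $\mathcal{H}$-edge consumes all $M$ copies of one chosen lift, while each $\alpha_u$-query consumes at most one still-unused copy per relevant $\mathcal{G}$-edge; a bookkeeping invariant (Condition~(D) in the paper) shows that the number of $s$-explored copies of a given $\mathcal{G}$-edge is bounded by the number of nearby $s$-explored $\mathcal{H}$-vertices, and $M$ is taken large enough that a fresh copy is always available. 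Combined with the observation that the $p$-explored edges inside $\pi^{-1}(B_r(u))$ are automatically open when $B_r(u)$ is fully $\omega$-open, this is what makes the disjointness you flag actually go through. Without some such device your coupling is not well-defined: the same $\eta$-edge would need to be read twice for distinct, supposedly independent, Bernoulli outcomes.
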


\begin{prop}
\label{proptwo}
Assume further that $p_c(\mathcal{H})<1$.
Then, for any choice of $r\geq 1$, the following holds: for every $s\in (0,1]$, there exists $p_s<p_c(\mathcal{H})$ such that for every $p\in [p_s,1]$, the cluster $\mathcal{C}_\mathcal{H}^{p,s}(o)$ is infinite with positive probability.
\end{prop}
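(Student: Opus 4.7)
The plan is to implement the Aizenman--Grimmett ``essential enhancement'' strategy \cite{aizenmangrimmett} in our exploratory two-parameter setting. Fix $s_0\in (0,1]$ and introduce, for $n\geq 1$, the finite-volume quantities $\theta_n(p,s) := \mathbb{P}_{p,s}(\mathcal{C}_o(\omega,\alpha)\cap S_n(o)\neq\varnothing)$. I would first check by induction on the step index that $\mathcal{C}_o(\omega,\alpha)$ is coordinate-wise non-decreasing in $(\omega,\alpha)$: cluster expansion is classically monotone, and the jump condition ``all edges of $B_r(u)$ open and $\alpha_u=1$'' is preserved when $(\omega,\alpha)$ grows. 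Consequently each $\theta_n$ is a polynomial in $(p,s)$ depending only on the coordinates inside a ball of radius $n+r$ around $o$, is non-decreasing in each variable, and satisfies $\theta_n\downarrow \theta_\infty(p,s) := \mathbb{P}_{p,s}(|\mathcal{C}_o|=\infty)$.

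Since $\{\mathcal{C}_o\cap S_n\neq\varnothing\}$ is increasing in $(\omega,\alpha)$, Russo's formula yields $\partial_p\theta_n = \sum_e \mathbb{P}_{p,s}(e \text{ is $\omega$-pivotal})$ and $\partial_s\theta_n = \sum_v \mathbb{P}_{p,s}(v \text{ is $\alpha$-pivotal})$. The crux is a local rewiring lemma: to every edge $e=\{x,y\}$ of $\mathcal{H}$ one assigns, once and for all, a vertex $v(e)$ with $d(v(e),x)\leq r$ and $d(v(e),y)=r+1$, together with a small deterministic modification (supported in $B_{r+1}(v(e))$) that turns any configuration in which $e$ is $\omega$-pivotal into one in which $v(e)$ is $\alpha$-pivotal for the same event. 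The modification opens every edge of $B_r(v(e))$ and sets $\alpha_{v(e)}=1$, so that $v(e)$ is reached from $o$ through $x$ and the enhancement at $v(e)$ fires, jumping onto $y$ and restoring the broken connection, while also closing the boundary edges in $S_{r+\frac{1}{2}}(v(e))\setminus\{e\}$ so that switching $\alpha_{v(e)}$ back to $0$ genuinely disconnects $o$ from $S_n$. Since the surgery involves at most $N$ coordinates (with $N$ depending only on $r$ and the maximum degree) and the map $e\mapsto v(e)$ has bounded fibres, a standard summation yields a differential inequality of the form $\partial_s\theta_n(p,s) \geq c(p,s)\,\partial_p\theta_n(p,s)$, with $c(p,s)>0$ independent of $n$ and bounded below on compact subsets of $(0,1)\times (0,1]$.

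To conclude, pick $s_1\in (0,s_0)$ small and $\delta>0$ small enough that $\delta < c_{\min}(s_0-s_1)$, where $c_{\min} := \inf c(p,s)$ over the rectangle $[p_c,p_c+\delta]\times[s_1,s_0]$. By definition of $p_c(\mathcal{H})$ one has $\theta_\infty(p_c+\delta,0)>0$, hence $\theta_\infty(p_c+\delta,s_1)>0$ by monotonicity. Consider the solution $s\mapsto (p(s),s)$ of the ODE $dp/ds=-c(p(s),s)$ with initial value $p(s_1)=p_c+\delta$; the chain rule combined with the differential inequality gives $\tfrac{d}{ds}\theta_n(p(s),s)\geq 0$, so $\theta_n(p(s_0),s_0)\geq \theta_n(p_c+\delta,s_1)$ for every $n$. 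Letting $n\to\infty$ yields $\theta_\infty(p(s_0),s_0)>0$, while the choice of $\delta$ guarantees $p(s_0)\leq p_c+\delta-c_{\min}(s_0-s_1)<p_c$. Setting $p_{s_0}:=p(s_0)$ settles the case $s=s_0$, and monotonicity in $s$ extends the result to every $s\in (0,1]$.

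The main obstacle is the design of the local rewiring. Two points must be handled: first, the existence of the assignment $e\mapsto v(e)$ with the prescribed distance conditions, which I expect to follow from the fact that $\mathcal{H}$ is infinite (guaranteed by $p_c(\mathcal{H})<1$) together with its bounded degree, via a short combinatorial lemma extending a geodesic from $y$ through $x$ by $r$ further steps; second, and more delicate, the surgery must ensure that once $\alpha_{v(e)}$ is switched off it really kills every path from $o$ to $S_n$. This last point is the subtle one, specific to the exploratory nature of our enhancement, and is the reason for also closing the corona $S_{r+\frac{1}{2}}(v(e))\setminus\{e\}$: one must argue that this closure does not simultaneously block the original connection from $o$ to $x$, which I expect to follow from a careful examination of how a pivotal-edge path can enter $B_r(v(e))$.
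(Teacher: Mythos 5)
Your high-level plan (Russo's formula for both derivatives, reduction to a local rewiring lemma producing an $s$-pivotal vertex near a $p$-pivotal edge, then the differential inequality and the ODE/line-segment argument) is exactly the strategy of the paper, and the concluding analytic step is essentially correct. The gaps are all in the local rewiring lemma, and they are genuine.

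\emph{The fixed assignment $e\mapsto v(e)$ need not exist.} You require $d(v(e),x)\le r$ and $d(v(e),y)=r+1$, which by the triangle inequality forces $d(v(e),x)=r$: so $x$ must lie on a geodesic of length $r+1$ emanating from $y$. On a bounded-degree graph this can fail --- e.g.\ if $x$ is a pendant vertex hanging off $y$, no vertex has $d(\cdot,x)=r$ and $d(\cdot,y)=r+1$. The paper avoids this by choosing a vertex $z$ (depending on the configuration and on whether $e$ is close to $o$ or to $S_{L-1}(o)$) subject only to $x\in B_r(z)$, $B_r(z)\subset B_{L-1}(o)$, $o\notin B_r(z)$ --- with \emph{no} constraint involving $y$.

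\emph{The reconnection should not be through $y$.} You want the enhancement at $v(e)$ to jump ``onto $y$''. The paper instead identifies a data-dependent vertex $u\in S_{r+1}(z)$ that is already in the cluster $\mathcal{C}_o(\tilde\omega,\alpha')$ for the ``fully closed'' configuration $\tilde\omega:=\omega\setminus E(B_{r+1}(z))$, adds a single edge $\{u,v\}$ linking $u$ to $B_r(z)$, and lets the enhancement reach $S_{r+1}(z)$ all at once; the edge $e$ itself plays no reconnection role. This is strictly more flexible and is what makes the surgery work in general.

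\emph{Keeping $e$ open defeats $s$-pivotality.} You propose to close $S_{r+\frac12}(v(e))\setminus\{e\}$, leaving $e$ untouched. If $\omega_e=1$, then after your surgery $o$ reaches $x$ and all of $B_r(v(e))$, crosses $e$ to $y$, and continues to $S_n$ with $\alpha_{v(e)}=0$: so $v(e)$ is not $s$-pivotal. The paper closes \emph{every} edge of $E(B_{r+1}(z))$ first (this set contains $e$), which guarantees that the modified configuration lies outside $\mathcal{E}_L$ when the enhancement is switched off.

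\emph{The $\alpha$-cleaning step is missing.} In the exploratory model, opening a full ball $B_r(z)$ can inadvertently trigger other enhancements if nearby $\alpha$-values equal $1$, and can also change the parity (bond/enhancement) of the exploration step at which $S_{r+1}(z)$ is reached. The paper's first move is to delete $\alpha$ on all of $B_R(e)$ (either finding an $s$-pivotal vertex en route, or arriving at $\alpha'$ with $\alpha'\cap B_{2r+1}(z)=\varnothing$); this is what guarantees both that $\mathcal{C}_o(\omega',\alpha')=\mathcal{C}_o(\tilde\omega,\alpha')\cup B_r(z)$ (no stray enhancement fires) and that the step in which the exploration first touches $S_{r+1}(z)$ is a pure bond step, which is how the reconnection vertex $u$ is found. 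Your proposal does not reset $\alpha$ near $e$, so this control is absent.

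In short: the differential-inequality scaffolding is the paper's, but the deterministic rewiring lemma --- the actual content of the proposition --- needs to (i) choose $z$ with configuration-dependent constraints that do not involve $y$, (ii) close all of $E(B_{r+1}(z))$ including $e$ before opening $E(B_r(z))$ and a single link edge, (iii) reconnect through a vertex $u\in S_{r+1}(z)$ already in $\mathcal{C}_o(\tilde\omega,\alpha')$ rather than through $y$, and (iv) begin by clearing $\alpha$ on $B_R(e)$.
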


Assuming these propositions, let us establish Theorem~\ref{thm:main}.

\saut

\textit{Proof of Theorem~\ref{thm:main}.} First, notice that if $p_c(\mathcal{H})=1$, then the conclusion holds trivially. We thus assume that $p_c(\mathcal{H})<1$. We pick $r$ so that the conclusion of Proposition~\ref{propone} holds.
Since boundedness of the degree of $\mathcal{H}$ implies that $p_c(\mathcal{H})>0$, we can pick some $\varepsilon$ in $(0,p_c(\mathcal{H}))$.
By Proposition~\ref{propone}, we can pick $s\in (0,1)$ such that for every $p\in [\varepsilon, 1]$, $\mathcal{C}_{\mathcal{H}}^{p,s}(o)$ is stochastically dominated by $\pi\left(\mathcal{C}_{\mathcal{G}}^p(o')\right)$. By Proposition~\ref{proptwo}, there is some $p_s<p_c(\mathcal{H})$ such that for every $p\in [p_s,1]$, the cluster $\mathcal{C}_\mathcal{H}^{p,s}(o)$ is infinite with positive probability. Fix such a $p_s$, and set $p:=\max(p_s,\varepsilon)<p_c(\mathcal{H})$. By definition of $p_s$, the cluster $\mathcal{C}_\mathcal{H}^{p,s}(o)$ is infinite with positive probability. As $p\geq \varepsilon$, the definition of $s$ implies that $\mathcal{C}_{\mathcal{H}}^{p,s}(o)$ is stochastically dominated by $\pi\left(\mathcal{C}_{\mathcal{G}}^p(o')\right)$. As a result, $\pi\left(\mathcal{C}_{\mathcal{G}}^p(o')\right)$ is infinite with positive probability. In particular, $\mathcal{C}_{\mathcal{G}}^p(o')$ is infinite with positive probability, so that $p_c(\mathcal{G})\leq p < p_c(\mathcal{H})$. \vspace{-0.8cm}\begin{flushright}$\square$\end{flushright}

\section{Proof of Proposition~\ref{propone}}
\label{sec:proofone}

\newcommand{\G}{\mathcal{G}}
\newcommand{\Hh}{\mathcal{H}}
\newcommand{\Ghat}{\hat{\mathcal{G}}}
\newcommand{\Hhat}{\hat{\mathcal{H}}}
\newcommand{\phat}{\hat{p}}

The choice of a suitable value of $r$ is given by the following lemma.

\begin{lemma}\label{lempattern}
There is a choice of $r\geq 1$ such that for every $x\in V(\mathcal{G})$, the set $Z=Z(x,r)$ defined as the connected component\footnote{Here $\pi^{-1}(B_r(\pi(x)))\cap B_{3r}(x)$ is seen as endowed with the graph structure induced by $\mathcal{G}$.} of $x$ in $\pi^{-1}(B_r(\pi(x)))\cap B_{3r}(x)$ satisfies that for any $u\in S_{r+1}(\pi(x))$, the fibre $\pi^{-1}(\{u\})$ contains at least two vertices adjacent to $Z$.
\end{lemma}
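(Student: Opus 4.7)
The plan is to take $r$ at least as large as the constant $R$ from the tame-fibres property. Given $x\in V(\mathcal{G})$ and $u\in S_{r+1}(\pi(x))$, I aim to produce the two required vertices of $\pi^{-1}(\{u\})$ as the endpoints of two vertex-disjoint lifts of a $\mathcal{H}$-geodesic from $\pi(x)$ to $u$, the lifts being anchored at $x$ and at a ``twin'' $x'$ of $x$ in the same fibre.

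First, I would invoke tame fibres to obtain some $x'\neq x$ with $\pi(x')=\pi(x)$ and $d(x,x')\leq R\leq r$. A geodesic in $\mathcal{G}$ from $x$ to $x'$ has length at most $R$, so all its vertices lie in $B_{R}(x)\subseteq B_{3r}(x)$; and since $\pi$ is $1$-Lipschitz, all their $\pi$-images lie in $B_{R}(\pi(x))\subseteq B_{r}(\pi(x))$. Hence this geodesic is contained in $\pi^{-1}(B_{r}(\pi(x)))\cap B_{3r}(x)$, so that $x'\in Z$.

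Next, fix $u\in S_{r+1}(\pi(x))$ and let $\mathcal{T}$ be a geodesic path from $\pi(x)$ to $u$ in $\mathcal{H}$, which is a subtree of $\mathcal{H}$ with $r+1$ edges containing $\pi(x)=\pi(x')$ as a vertex. Applying the disjoint tree-lifting property to $\mathcal{T}$ with the preimages $x$ and $x'$, I obtain two vertex-disjoint lifts $\mathcal{T}_1\ni x$ and $\mathcal{T}_2\ni x'$, each a path of length $r+1$ starting at $x$ or $x'$ respectively and ending at a vertex $\tilde{u}_1$, respectively $\tilde{u}_2$, of $\pi^{-1}(\{u\})$. Vertex-disjointness forces $\tilde{u}_1\neq \tilde{u}_2$, so it only remains to see that both are adjacent to $Z$. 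For the lift $\mathcal{T}_1=(x=y_0,y_1,\ldots,y_{r+1}=\tilde{u}_1)$, we have $d(x,y_i)\leq i$ and $d(\pi(x),\pi(y_i))=i$, so for $i\leq r$ the vertex $y_i$ sits in $\pi^{-1}(B_r(\pi(x)))\cap B_{3r}(x)$; hence the whole subpath $y_0,\ldots,y_r$ lies in $Z$, and $\tilde{u}_1$ is adjacent to $y_r\in Z$. The analogous check for $\mathcal{T}_2=(x'=y'_0,\ldots,y'_{r+1}=\tilde{u}_2)$ uses $d(x,y'_i)\leq R+i$ instead of $d(x,y_i)\leq i$.

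The main technical point, and the reason for the factor $3$ in the definition of $Z$, is this last geometric calibration: the lift $\mathcal{T}_2$ may drift up to $R$ away from $x$ before even starting to move, so one needs $R+r\leq 3r$ to ensure that $y'_r\in B_{3r}(x)$. The choice $r\geq R$ provides exactly this slack, and beyond it the argument is a direct combination of the tame-fibres hypothesis (producing $x'$) and the disjoint-tree-lifting hypothesis (producing distinct endpoints $\tilde{u}_1,\tilde{u}_2$).
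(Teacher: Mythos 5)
Your proof is correct and follows essentially the same strategy as the paper's: produce a twin $x'\in Z$ via the tame-fibres property, then apply disjoint tree-lifting to land two lifts of a tree reaching $u$ on distinct vertices of $\pi^{-1}(\{u\})$ adjacent to $Z$. The paper lifts a single BFS spanning tree of the whole ball $B_{r+1}(\pi(x))$ (handling all $u\in S_{r+1}(\pi(x))$ at once) and uses the slightly sharper $r=\lceil R/2\rceil$ by noting that points of the geodesic from $x$ to $x'$ lie within distance $R/2$ of one of the two endpoints, but these are cosmetic differences from your choice of a per-$u$ geodesic path and the more generous $r\geq R$.
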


\begin{proof}
Let $R$ be given by the fact that $\pi$ has tame fibres and set $r:=\lceil \frac{R}{2}\rceil$. Let $x$ be any vertex of $\mathcal{G}$. Take some $y\in V(\mathcal{G})$ such that $\pi(x)=\pi(y)$ and $0<d(x,y)\leq R$. Let $\mathcal{T}$ be a spanning tree of $B_{r+1}(\pi(x))$ obtained by adding first the vertices at distance 1, then at distance 2, etc. As $\pi$ has the disjoint tree-lifting property, one can pick two vertex-disjoint lifts $\mathcal{T}_x$ and $\mathcal{T}_y$ of $\mathcal{T}$ such that $x\in V(\mathcal{T}_x)$ and $y \in V(\mathcal{T}_y)$.

Let $\gamma$ be a geodesic path from $x$ to $y$, thus staying inside $\pi^{-1}(B_r(\pi(x)))$ as $R\leq 2r$. The set $Z'$ consisting in the union of the span of $\gamma$ and $(V(\mathcal{T}_x)\cup V(\mathcal{T}_y))\cap \pi^{-1}(B_r(\pi(x)))$ is a connected subset of $Z(x,r)$: its connectedness results from the choice of the spanning tree $\mathcal{T}$. It thus suffices to prove that for any $u\in S_{r+1}(\pi(x))$, the fibre $\pi^{-1}(\{u\})$ contains at least two vertices adjacent to $Z'$. But this is the case as every such $u$ admits a lift in $\mathcal{T}_x$ and another one in $\mathcal{T}_y$.
\end{proof}

Take $r$ to satisfy the conclusion of Lemma~\ref{lempattern}.
Let $\varepsilon >0$.
Set $M$ and $s$ to be so that the following two conditions hold:
\[\forall e=\{x,y\}\in E(\mathcal{H}),~M\geq |B_r(x)\cup B_r(y)|,\]
\[\forall x \in V(\mathcal{G}),~s\leq \left(1-(1-\varepsilon)^{1/M}\right)^{|E(B_{3r+1}(x))|}.\]
For instance, one may take $M:=D^{r+2}$ and $s:=(1-(1-\varepsilon)^{1/M})^{D^{3r+2}}$, where $D$ stands for the maximal degree of a vertex of $\mathcal{G}$.
Let $p\in [\varepsilon, 1]$.

\saut

We define the multigraph $\Ghat$ as follows: the vertex-set is $V(\mathcal{G})$, the edge-set is $E(\mathcal{G})\times\{1,\dots,M\}$, and $(\{x,y\},k)$ is interpreted as an edge connecting $x$ and $y$. The multigraph $\Hhat$ is defined in the same way, with $\mathcal{H}$ instead of $\mathcal{G}$. The purpose of this multigraph is to allow multiple use of each edge for a bounded number of ``$s$-bonus''. They will play no role as far as $p$-exploration is concerned: concretely, for ``$p$-exploration'', each edge will be considered together with all its parallel copies.

Let $\omega$ be a Bernoulli percolation of parameter $\phat:=1-(1-p)^{1/M}$ on $\Hhat$, so that $\phat$-percolation on $\Hhat$ corresponds to $p$-percolation on $\Hh$. Let $\omega'$ be a Bernoulli percolation of parameter $\phat$ on $\Ghat$ that is independent of $\omega$.  Choose an injection from $E(\Hh)$ to $\N$, so that $E(\Hh)$ is now endowed with a well-ordering. Do the same with $E(\G)$, $V(\G)$ and $V(\Hh)$.

We now define algorithmically an exploration process. This dynamical process will construct edge after edge a Bernoulli percolation $\eta$ of parameter $\phat$ on $\Ghat$ and an $\alpha$ with distribution $\textsf{Ber}(s)^{\otimes V(\mathcal{H})}$. The random variables $\eta$, $\alpha$, and $\omega$ will be coupled in a suitable way, and $\alpha$ will be independent of $\omega$. 

\paragraph{Structure of the process}{In the exploration, edges in $\Ghat$ may get explored in two different ways, called $p$-explored and $s$-explored. Edges in $\Hh$ may get $p$-explored, and vertices in $\Hh$ may get $s$-explored. No vertex or edge will get explored more than once. In particular, no edge of $\Ghat$ will get $p$- \emph{and} $s$-explored.}

\saut

\newcommand{\condp}{(A)~}
\newcommand{\condpp}{(A)}
\newcommand{\conds}{(D)~}
\newcommand{\condother}{(B)}
\newcommand{\condothers}{(B)~}
\newcommand{\condcon}{(C)}
\newcommand{\condcons}{(C)~}
\newcommand{\cond}{(E)~}

{For every $\ell>0$, during Step $\ell$, we will define inductively a sequence $(C_{\ell,n})_n$ of subsets of $V(\mathcal{H})$ and a sequence $(C'_{\ell,n})_n$ of subsets of $V(\mathcal{G})$.
At the end of each iteration of the process, it will be the case that the following conditions hold:
\begin{itemize}
\item[\condp] If an edge $e$ in $\Hh$ is $p$-explored, then there is a lift $e'$ of $e$ in $\mathcal{G}$ such that the set of the $p$-explored lifts of $e$ is precisely $\{e'\}\times\{1,\dots,M\}$.

\item[\condothers] If an edge $e$ in $E(\Hh)$ is $p$-unexplored, then all of its lifts are unexplored.

\item[\condcons] Every element of $C_{\ell,n}'$ is connected to $o'$ by an $\eta$-open path.

\item[\conds] For every edge $e$ in $\Hh$ and each lift $e'$ of $e$ in $\mathcal{G}$, the number of $s$-explored edges of the form $(e',k)$ is at most the number of $s$-explored vertices $u$ in $\Hh$ at distance at most $r$ from some endpoint of $e$.

\item[\cond] The map $\pi$ induces a well-defined surjection from $C'_{\ell,n}$ to $C_{\ell,n}$.
\end{itemize}
}

\paragraph{Step 0} Set $C_0=\{o\}$ and $C'_0=\{o'\}$. Initially, nothing is considered to be $p$- or $s$-explored.

\paragraph{Step $2K+1$} Set $C_{2K+1,0}:=C_{2K}$ and $C'_{2K+1,0}:=C'_{2K}$.

While there is an unexplored edge that intersects $C_{2K+1,n}$ in $\Hh$, do the following {(otherwise finish this step)}:

\vspace{0.25cm}

\begin{enumerate}
\item take $e$ to be the smallest such edge, 

\item pick $u$ an endpoint of $e$ in $C_{2K+1,n}$ and call $v$ its other endpoint,

\item pick $e'$ some lift of $e$ intersecting $\pi^{-1}(\{u\})\cap C'_{2K+1,n} \not= \varnothing$,

\item declare $e$ and all $(e',k)$'s to be $p$-explored (they were unexplored before because of Conditions \condp and \condother),

\item for every $k\leq M$, define $\eta_{(e',k)}:=\omega_{(e,k)}$,

\item set $(C_{2K+1,n+1},C'_{2K+1,n+1}):=(C_{2K+1,n},C'_{2K+1,n})$ if all the $(e,k)$'s are $\omega$-closed; otherwise, set $(C_{2K+1,n+1},C'_{2K+1,n+1}):=(C_{2K+1,n}\cup\{v\},C'_{2K+1,n}\cup e')$.
\end{enumerate}

\vspace{0.1cm}

{When this step is finished{, which occurs after finitely or countably many iterations},} set $C_{2K+1}:=\bigcup_n C_{2K+1,n}$ and $C'_{2K+1}:=\bigcup_n C_{2K+1,n}'$.

\paragraph{Step $2K+2$} Set $C_{2K+2,0}:=C_{2K+1}$ and $C'_{2K+2,0}:=C'_{2K+1}$.

While there is at least one $s$-unexplored vertex in $C_{2K+1}$ whose $r$-ball is ``fully open''\footnote{In the sense that for each $\mathcal{H}$-edge inside, \emph{at least} one of its copies in $\Hhat$ is open.} in $\omega$, do the following {(otherwise finish this step)}:

\vspace{0.25cm}

\begin{enumerate}
\item take $u$ to be the smallest such vertex,

\item pick some $x \in C'_{2K+1}\cap \pi^{-1}(\{u\})\not= \varnothing$,

\item \emph{This paragraph is not an algorithmic substep, but gathers a few relevant observations. Call an edge in $\mathcal{G}$ $p$-explored if one (hence every by \condpp) of its copies in $\Ghat$ is $p$-explored. Call a $p$-explored edge of $\mathcal{G}$ open if at least one of its copies is $\eta$-open. Notice that by construction and as the $r$-ball of $u$ is ``fully open'' in $\omega$, all the $p$-explored edges of $\mathcal{G}$ that lie inside $\pi^{-1}(B_r(u))$ are open. Also note that for each edge lying in $Z(x,r)$, Condition~\conds and the value of $M$ guarantee that at least one of its copies in $\Ghat$ has not been $s$-explored. As a result, for every edge in $Z(x,r)$, either all its copies have a well-defined $\eta$-status and one of them is open, or at least one of these copies has a still-undefined $\eta$-status. This is what makes Substep~\ref{substepbis} possible.}

\item For each $p$-unexplored edge $e'$ in $Z(x,r)$, take its $s$-unexplored copy $(e',k)$ in $\Ghat$ of smallest label $k$, set $\eta_{(e',k)}:=\omega'_{(e',k)}$, and switch its status to $s$-explored.\label{substepbis}

\item If all these newly $s$-explored edges are open (so that $Z$ is ``fully $\eta$-open''), then perform this substep. By \condp and the definition of $r$, for every $\mathcal{H}$-edge $e\in S_{r+\frac{1}{2}}(u)$, there is at least one lift $e'$ of $e$ that is adjacent to $Z(x,r)$ and $p$-unexplored: pick the smallest one. By \conds and the value of $M$, one of its copies $(e',k)$ is $s$-unexplored: pick that with minimal $k=:k_e$. Declare all these edges to be $s$-explored and set $\eta_{(e',k_e)}:=\omega'_{(e',k_e)}$. If all these $(e',k_e)$'s are $\omega'$-open, then say that this substep is successful.\label{substeptwobis}

\item Notice that conditionally on everything that happened strictly before the current Substep~\ref{substepbis}, the event ``Substep~\ref{substeptwobis} is performed and successful'' has some (random) probability $q\geq \phat^{|E(Z(x,r))|}\geq \phat^{|E(B_{3r}(x))|} \geq s$. If the corresponding event does no occur, set $\alpha_u:= 0$. If this event occurs, then, independently on $(\omega,\omega')$ and everything that happened so far, set $\alpha_u:=1$ with probability $s/q\leq 1$ and $\alpha_u:=0$ otherwise. Declare $u$ to be $s$-explored.

\item If $\alpha_u=1$, then set $C_{2K+2,n+1}:=C_{2K+2,n}\cup S_{r+1}(u)$ and $C'_{2K+2,n+1}$ to be the union of $C_{2K+2,n}$, $Z(x,r)$, and the $e'$'s of Substep~\ref{substeptwobis}. Notice that Condition~\condcon~continues to hold as in this case $Z$ is ``fully $\eta$-open'' and $\eta$-connected to $C_{2K+2,n}$. Otherwise, set $C_{2K+2,n+1}:=C_{2K+2,n}$ and $C'_{2K+2,n+1}:=C'_{2K+2,n}$.
\end{enumerate}
\vspace{0.3cm}
When this step is finished, set $C_{2K+2}:=\bigcup_n C_{2K+2,n}$ and $C'_{2K+2}:=\bigcup_n C_{2K+2,n}'$.

\paragraph{Step $\infty$} Set $C_\infty:=\bigcup_K C_K$ and $C'_\infty:=\bigcup_K C'_K$. Take $\eta'$ independent of everything done so far, with distribution $\mathsf{Ber}(\phat)^{\otimes E(\Ghat)}$. Wherever $\eta$ is undefined, define it to be equal to $\eta'$. In the same way, wherever $\alpha$ is undefined, toss independent Bernoulli random variables of parameter $s$, independent of everything done so far.

\vspace{0.3 cm}

By construction, $C_\infty$ has the distribution of the cluster of the origin for the $(p,s)$-process on $\Hh$: it is the cluster of the origin of $((\vee_k \omega_{e,k})_e,\alpha)$ which has distribution $\mathsf{Ber}(p)^{\otimes E(\Hh)}\otimes \mathsf{Ber}(s)^{\otimes V(\Hh)}$. Recall that $\vee$ stands for the maximum operator.
Besides, $C'_\infty$ is included in the cluster of $o'$ for $(\vee_k \eta_{e,k})_e$, which is a $p$-bond-percolation on $\mathcal{G}$. Finally, the coupling guarantees that $\pi$ surjects $C'_\infty$ onto $C_\infty$. Proposition~\ref{propone} follows.

\begin{rem}
This construction adapts to site percolation. The lift is the same as in \cite{bspercobeyond} while the ``multiple edges'' trick now consists in defining $\Ghat$ as follows: each vertex has $M$ possible states, and it is $p$-open if one of its $\phat$-states says so.
\end{rem}

\section{Proof of Proposition~\ref{proptwo}}
\label{sec:prooftwo}

In this proof, we follow the strategy of Aizenman and Grimmett \cite{aizenmangrimmett, essrev}.

By monotonicity, we can assume without loss of generality that $s<1$. Let $\theta_L(p,s)$ be the $\mathbb{P}_{p,s}$-probability of the event $\mathcal{E}_L := \{\mathcal{C}_o(\omega,\alpha) \cap S_L(o) \neq \varnothing \}$, and $\theta(p,s)=\lim_{L\rightarrow\infty}\theta_L(p,s)$ be the probability that $\mathcal{C}_o(\omega,\alpha)=\mathcal{C}_{\mathcal{H}}^{p,s}(o)$ is infinite.
We claim that in order to prove Proposition~\ref{proptwo}, we only need to show that for any $\varepsilon>0$, there exist $c=c(\varepsilon)>0$ and $L_0(\varepsilon)\geq1$ such that for any $p,s \in [\varepsilon,1-\varepsilon]$ and $L\geq L_0$, we have
\begin{equation}
\label{diffineq}
 \frac{\partial}{\partial s} \theta_L(p,s) \geq c\frac{\partial}{\partial p} \theta_L(p,s).
\end{equation}
Indeed, assume that (\ref{diffineq}) is true. It is easy to see that, since $p_c(\mathcal{H})\in(0,1)$, for any $s\in(0,1)$, there is some $\varepsilon>0$ such that we can find a curve  --- actually a line segment --- $(\mathbf{p}(t),\mathbf{s}(t))_{t\in[0,s]}$ inside $[\varepsilon,1-\varepsilon]^2$ satisfying $\frac{\mathbf{p}'(t)}{\mathbf{s}'(t)}=-c$ for all $t\in[0,s]$ and $p_0:=\mathbf{p}(0)>p_c(\mathcal{H})$, $p_s:=\mathbf{p}(s)<p_c(\mathcal{H})$, $\mathbf{s}(s)=s$. But now note that (\ref{diffineq}) implies that $t\mapsto\theta_L(\mathbf{p}(t),\mathbf{s}(t))$ is a non-decreasing function for all $L\geq L_0$. In particular we have $\theta(p_s,s)=\theta(\mathbf{p}(s),\mathbf{s}(s)) = \lim_L\theta_L(\mathbf{p}(s),\mathbf{s}(s)) \geq \lim_L\theta_L(\mathbf{p}(0),\mathbf{s}(0)) = \theta(\mathbf{p}(0),\mathbf{s}(0))\geq \theta(p_0,0)>0$,
where in the last inequality we use $p_0>p_c(\mathcal{H})$. By monotonicity, we conclude that for every $p\in[p_s,1]$, we have $\theta(p,s)>0$ as desired.

Now note that since the event $\mathcal{E}_L$, which depends only on finitely many coordinates, is increasing in both $\omega$ and $\alpha$, the Margulis-Russo formula gives us
\[ \frac{\partial}{\partial p} \theta_L(p,s) = \sum_{e}\mathbb{P}_{p,s}(e \text{ is $p$-pivotal for } \mathcal{E}_L), \]
\[ \frac{\partial}{\partial s} \theta_L(p,s) = \sum_{x}\mathbb{P}_{p,s}(x \text{ is $s$-pivotal for } \mathcal{E}_L). \]
Recall that an edge $e$ is said to be \defini{$p$-pivotal} for an increasing event $\mathcal{E}$ in a configuration $(\omega,\alpha)$ if $(\omega\cup\{e\},\alpha)\in\mathcal{E}$ but $(\omega\setminus\{e\},\alpha)\notin\mathcal{E}$. 
Similarly, a vertex $x$ is said to be \defini{$s$-pivotal} for an increasing event $\mathcal{E}$ in a configuration $(\omega,\alpha)$ if $(\omega,\alpha\cup\{x\})\in\mathcal{E}$ but $(\omega,\alpha\setminus\{x\})\notin\mathcal{E}$. 

It follows from the above formulas that in order to derive (\ref{diffineq}), it is enough to prove that for some $R, L_0>0$, for every $\varepsilon>0$, there is some $c'>0$ such that for any edge $e$, any $p,s\in[\varepsilon,1-\varepsilon]$, and any $L\geq L_0$, one has
\begin{equation}
\label{goal}
\sum_{x\in B_R(e)} \mathbb{P}_{p,s}(x \text{ is $s$-pivotal for } \mathcal{E}_L) \geq c' \mathbb{P}_{p,s}(e \text{ is $p$-pivotal for } \mathcal{E}_L),
\end{equation}
where for $e=\{x,y\}$, we set $B_R(e):=B_R(x)\cup B_R(y)$.
Indeed, since each vertex can be in $B_R(e)$ for at most $C:=\max_x |E(B_{R+1}(x))|$ different $e$'s, summing (\ref{goal}) over $e$ gives:
\[\sum_{x} C \mathbb{P}_{p,s}(x \text{ is $s$-pivotal for } \mathcal{E}_L) \geq c' \sum_{e}\mathbb{P}_{p,s}(e \text{ is $p$-pivotal for } \mathcal{E}_L)\]
which implies (\ref{diffineq}) for $c:=c'/C$.

The following deterministic lemma directly implies (\ref{goal}).
 
\begin{figure}[h!]
	\centering
	\begin{subfigure}[b]{0.48\linewidth}
		\includegraphics[width=\linewidth]{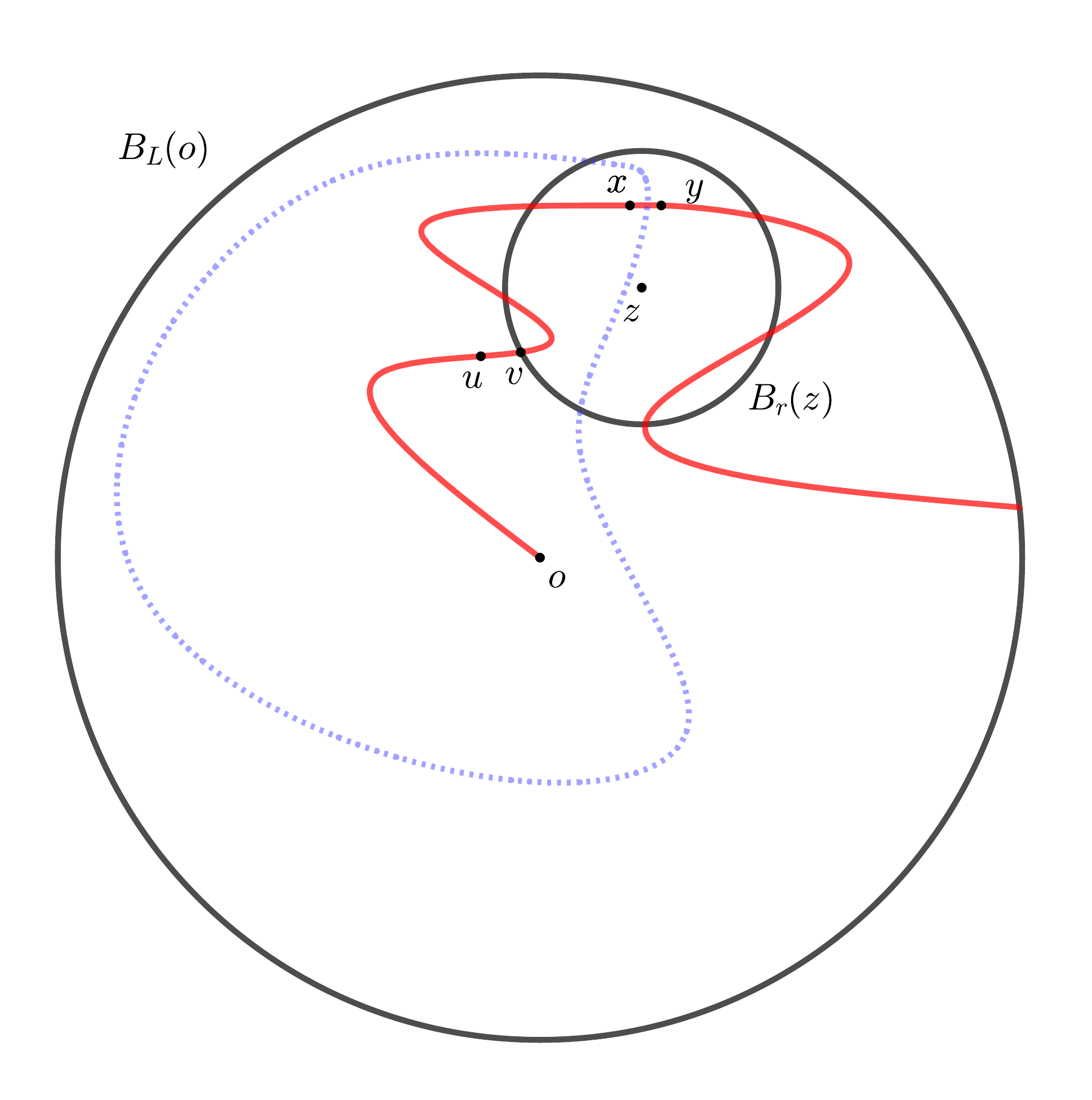}
		\caption*{Configuration $(\omega,\alpha)$.}
	\end{subfigure}
	\begin{subfigure}[b]{0.48\linewidth}
		\includegraphics[width=\linewidth]{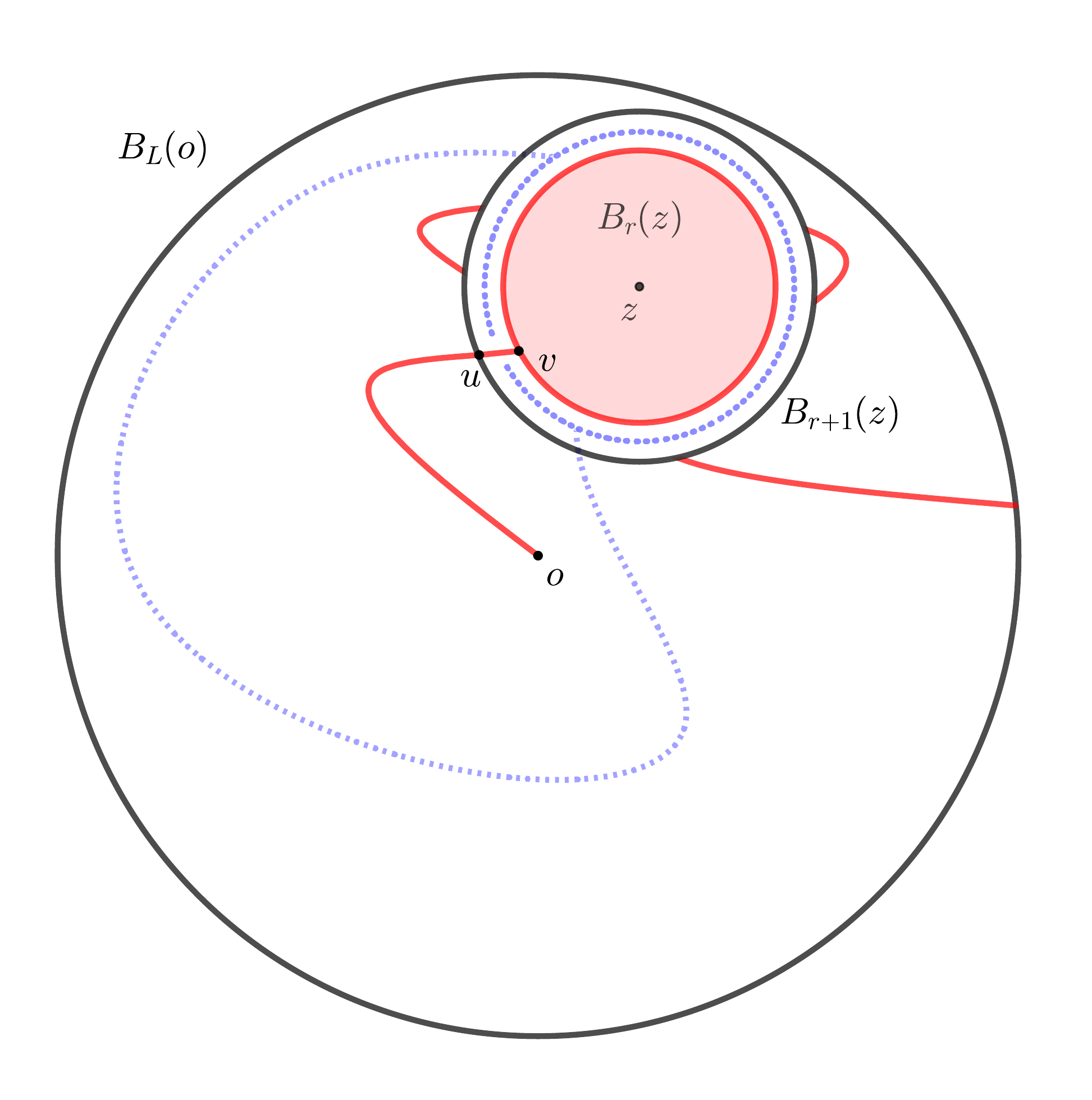}
		\caption*{Configuration $(\omega',\alpha')$.}
	\end{subfigure}
	\caption{A picture of Case a in the proof of Lemma \ref{lem:proptwo}. The colour red represents open edges, either in odd or even steps. The dashed lines in blue represent closed edges preventing certain connections.}
	\label{fig:coffee}
\end{figure}

\begin{lemma}
\label{lem:proptwo}
	There are constants $R$ and $L_0$ such that the following holds. If $L\geq L_0$ and an edge $e$ is $p$-pivotal for $\mathcal{E}_L$ in a configuration $(\omega,\alpha)$, then there exist a configuration $(\omega',\alpha')$ differing from $(\omega,\alpha)$ only inside $B_R(e)$ and a vertex $z$ in $B_R(e)$ such that $z$ is $s$-pivotal for $\mathcal{E}_L$ in $(\omega',\alpha')$.
\end{lemma}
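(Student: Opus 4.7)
My plan is to follow the deterministic pivotal exchange of Aizenman and Grimmett~\cite{aizenmangrimmett}: given a $p$-pivotal edge $e$, I will surgically replace its role by an $s$-pivotal vertex in a bounded neighborhood. I would fix $R$ comparable to $r$ (e.g. $R := 3r+2$) and take $L_0$ large enough (e.g. $L_0 := 4R$) so that the ball $B_R(e)$ affords room for the surgery without crippling the approach from $o$ or the exit toward $S_L(o)$.

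Given a $p$-pivotal edge $e = \{x,y\}$ for $\mathcal{E}_L$ in $(\omega,\alpha)$, I would distinguish two cases according to how $e$ operates in the exploration of $\mathcal{C}_o(\omega \cup \{e\}, \alpha)$. In \textbf{Case a}, the edge $e$ is used as an odd-step traversal, meaning that $y$ first joins the cluster through $e$ (and not through any other $\omega$-open edge incident to it). In \textbf{Case b}, the edge $e$ is not used in any odd step but is crucial via an even-step enhancement: there exists $u$ with $\alpha_u = 1$ and $e$ in $B_r(u)$ such that the enhancement at $u$, which requires $B_r(u)$ fully open, is what allows the cluster to reach $S_L(o)$.

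For Case a, I take $z := x$, so that $\{x,y\} \subset B_r(z) \subset B_R(e)$, and define $(\omega',\alpha')$ by: (i) opening every edge in $B_r(z)$ in $\omega'$; (ii) closing every edge crossing $S_{r+\frac{1}{2}}(z)$ except a single chosen edge on the ``$o$-side'' through which the exploration approaches $e$; (iii) leaving everything outside $B_R(e)$ and $\alpha_v$ for $v \neq z$ unchanged. Then $B_r(z)$ is fully $\omega'$-open, so the enhancement at $z$ fires when $\alpha'_z = 1$, adding all of $S_{r+1}(z)$ to the cluster and from there reaching $S_L(o)$ via the unaltered post-$e$ portion of $(\omega,\alpha)$; when $\alpha'_z = 0$, the boundary closings prevent the cluster from exiting $B_r(z)$. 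For Case b, I take $z := u$ and apply the analogous surgery: open all of $B_r(u)$, close selected boundary edges to cut off alternative routes, and let $\alpha'_z$ be the switch.

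The main obstacle is the asymmetric, exploratory nature of $\mathcal{C}_o$ noted in the remark following its definition: pivotality depends on the order in which the exploration unfolds, not just on static connectivity. Moreover, when $e$ lies close to $o$ or to $S_L(o)$, the ball $B_R(e)$ may intersect these, requiring care to preserve $o$ as the exploration root and the existence of an exit toward $S_L(o)$. I would handle both complications by exploiting the localization of the surgery to $B_R(e)$: the part of the exploration taking place outside $B_R(e)$ (before entering and after leaving the surgery region) is identical in $(\omega,\alpha)$ and $(\omega',\alpha')$, and the explicit choice of the entry edge on $S_{r+\frac{1}{2}}(z)$ ensures that the pre-$e$ exploration still reaches $z$ at the right logical step, so that toggling $\alpha'_z$ is exactly what determines whether $\mathcal{E}_L$ occurs.
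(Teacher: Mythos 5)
Your overall strategy (Aizenman--Grimmett surgery: open a ball around a chosen vertex $z$, cut its boundary except for a single entry, and let $\alpha_z$ be the switch) matches the paper's, but there is a genuine gap that the paper's proof is specifically structured to avoid.

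The missing ingredient is the preliminary $\alpha$-clearing step. The paper first removes from $\alpha$ all vertices in $B_R(e)$ one by one; if the event $\mathcal{E}_L$ fails at some point, the last removed vertex is already the desired $s$-pivotal vertex, and otherwise one can work with $\alpha' := \alpha \setminus B_R(e)$, which satisfies $\alpha' \cap B_R(e) = \varnothing$. This cleaning is what makes the surgery controllable: once there are no $\alpha$-active vertices in $B_{2r+1}(z)$, the only way the cluster can leave $B_r(z)$ in $\omega'$ is through the closed boundary, so $\alpha'_z = 0$ genuinely traps the cluster, and the comparison $\mathcal{C}_o(\omega,\alpha')\subset\mathcal{C}_o(\omega',\alpha'\cup\{z\})$ goes through. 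In your proposal you keep $\alpha_v$ unchanged for all $v\neq z$, so an $\alpha$-active vertex $w$ inside $B_R(e)$ could still have $B_r(w)$ made fully open by your surgery and fire an enhancement that escapes the boundary closings even when $\alpha'_z = 0$; then $z$ is not $s$-pivotal and the argument breaks. This is not a cosmetic issue --- without the cleaning, the exploratory enhancement can leak in ways your ``boundary closings'' don't control.

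Your case split also differs from the paper's in a way that is both harder to execute and somewhat ill-posed. You distinguish whether $e$ is used in an odd step or via an even-step enhancement, but after the $\alpha$-cleaning (which you need anyway), the even-step case cannot occur: any $u$ with $e\in B_r(u)$ lies in $B_R(e)$, where $\alpha'$ has been cleared, so $e$ can only act as a traversal edge. The paper instead splits on the geometry, $d(o,e)>r$ versus $d(o,e)\leq r$, which is cleaner and also resolves a second problem you gloss over: when you take $z:=x$ unconditionally, $B_r(z)$ may contain $o$ or touch $S_L(o)$, and then the final comparison $\mathcal{C}_o(\omega',\alpha')=\mathcal{C}_o(\tilde\omega,\alpha')\cup B_r(z)$ may intersect $S_L(o)$ for the wrong reason. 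The paper's Case a explicitly chooses $z$ on a geodesic from $x$ to $o$ so that $x\in B_r(z)\subset B_{L-1}(o)$ and $o\notin B_r(z)$, precisely to avoid this. Finally, the choice of the single entry edge is not arbitrary: the paper constructs $\tilde\omega:=\omega\setminus E(B_{r+1}(z))$ and picks $u\in S_{r+1}(z)$ with $u\in\mathcal{C}_o(\tilde\omega,\alpha')$, which is what guarantees the exploration still reaches the surgery region; ``a single chosen edge on the $o$-side'' needs this level of precision to yield the containment you assert.
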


\begin{proof}
 Take $R:=3r+1$ and $L_0:=2r+2$. Let $(\omega,\alpha)$ and $e$ be as in Lemma~\ref{lem:proptwo} and assume without loss of generality that $(\omega,\alpha)\in\mathcal{E}_L$. Now, remove from $\alpha$ all the vertices in $B_R(e)$ one by one. If at some point we get, for the first time, a configuration $(\omega,\alpha')$ that is not in $\mathcal{E}_L$ anymore, then it means that the last vertex $z$ that was removed is $s$-pivotal for that configuration $(\omega,\alpha')$, thus yielding the conclusion of the lemma. Therefore we can assume that $(\omega,\alpha')\in\mathcal{E}_L$ where $\alpha':=\alpha\setminus B_R(e)$. In particular, $e$ is still $p$-pivotal in $(\omega,\alpha')$. We now have two cases.
 
\paragraph{Case a.} \emph{The edge $e=\{x,y\}$ is far from the origin $o$, namely $d(o,e) > r$.}

\vspace{0.08cm}

 Since $e$ is $p$-pivotal for $\mathcal{E}_L$, we have $e\subset B_L(o)$ and $e\not\subset S_L(o)$. So we can assume without loss of generality that $x\in B_{L-1}(o)$. Take $z$ to be a vertex such that $x\in B_r(z)\subset B_{L-1}(o)$ and $o\notin B_r(z)$.\footnote{Just take a suitable vertex in some geodesic from $x$ to $o$. In the case where $d(x,S_{L-1}(o))\geq r$ one can simply take $z=x$. Here we are using that $L\geq L_0=2r+2$.} 
 Now, take some vertex $u\in S_{r+1}(z)$ such that $u\in\mathcal{C}_o(\tilde{\omega},\alpha')$, where $\tilde{\omega}$ is given by closing in $\omega$ all the edges inside $B_{r+1}(z)$, i.e.\ $\tilde{\omega}:=\omega\setminus E(B_{r+1}(z))$. Such a vertex can be obtained as follows. Let $n$ be the first step of the exploration that contains some vertex of $S_{r+1}(z)$, i.e. such that $C_n(\omega,\alpha')\cap S_{r+1}(z)\neq\varnothing$. The previous step $n-1$ does not depend on the state of the edges inside $B_{r+1}(z)$. In particular, one has $C_{n-1}:=C_{n-1}(\omega,\alpha')=C_{n-1}(\tilde{\omega},\alpha')$. Notice that as $\alpha'\cap B_{2r+1}(z)=\varnothing$, the step $n$ is actually an odd one (in which we only explore things in $\omega$). Therefore $C_{n-1}$ is $\omega$-connected to $S_{r+1}(z)$. In particular, there is some $u\in S_{r+1}(z)$ such that $C_{n-1}$ is $\omega$-connected to $u$ outside $ B_{r+1}(z)$, thus also $\tilde{\omega}$-connected. All of this implies that $u\in C_n(\tilde{\omega},\alpha') \subset \mathcal{C}_o(\tilde{\omega},\alpha')$. 
 Let $v$ be any neighbour of $u$ in $B_r(z)$. Finally, define $\omega'$ by opening in $\tilde{\omega}$ the edge $\{u,v\}$ together with all the edges inside $B_r(z)$. Formally, one has $$\omega':=[\omega\setminus E(B_{r+1}(z))] \cup [E(B_{r}(z))\cup \{\{u,v\}\}].$$
 
 \paragraph{Case b.} \emph{The edge $e$ is close to the origin, namely $d(o,e)\leq r$.}

\vspace{0.08cm}

 Without loss of generality, assume  $d(o,x)\leq r$. Then simply take $z=x$ and $\omega'$ given by closing in $\omega$ all the edges inside $B_{r+1}(x)$ and then opening all the edges inside $B_r(x)$, i.e. $\omega':=[\omega\setminus E(B_{r+1}(x))]\cup E(B_r(x))$.
 
\vspace{0.38cm}

 We claim that, in both cases above, $z$ is $s$-pivotal for the event $\mathcal{E}_L$ in the configuration $(\omega',\alpha')$. We are only going to treat Case a. We leave the slightly simpler Case b to the reader.

Remind that by definition of $u$, we have $u\in\mathcal{C}_o(\tilde{\omega},\alpha')$. Since $\alpha'\cap B_{2r+1}(z)=\varnothing$, one can see that after opening at $\tilde{\omega}$ all the edges inside $B_r(z)$ together with $\{u,v\}$ (thus yielding $\omega'$), we do not add any extra vertex in even steps but we add $B_r(z)$ at a certain odd step, so that $\mathcal{C}_o(\omega',\alpha')=\mathcal{C}_o(\tilde{\omega},\alpha')\cup B_r(z)$. In particular, one has $\mathcal{C}_o(\omega',\alpha')\cap S_L(o)=\varnothing$, so that $(\omega',\alpha')\notin \mathcal{E}_L$.

Recall that $z\in \mathcal{C}_o(\omega',\alpha')\subset \mathcal{C}_o(\omega',\alpha'\cup\{z\})$ and that $B_r(z)$ is $p$-open. This implies that $B_{r+1}(z)$ is contained in $\mathcal{C}_o(\omega',\alpha'\cup \{z\})$. Together with $\omega \subset \omega'\cup B_{r+1}(z)$ and $B_{2r+1}(z) \cap \alpha'=\varnothing$, this implies that $\mathcal{C}_o(\omega,\alpha')\subset \mathcal{C}_{B_{r+1}(z)\cup\{o\}}(\omega,\alpha')\subset \mathcal{C}_{B_{r+1}(z)\cup\{o\}}(\omega',\alpha'\cup\{z\})= \mathcal{C}_{o}(\omega',\alpha'\cup\{z\})$. As a result, $\mathcal{C}_o(\omega',\alpha'
 \cup\{z\})\cap S_L(o)\neq\varnothing$, so that $(\omega',\alpha'\cup\{z\})\in\mathcal{E}_L$.
\end{proof}
\vspace{-0.2cm}
\begin{rem}
As in the previous section, the proof above can be adapted to site percolation in a straightforward way.	
\end{rem}

\section{Deriving corollaries from Theorems~\ref{thm:main} and \ref{thm:pu}}
\label{sec:corotothm}

Corollary~\ref{coro:answer} results from and Lemmas~\ref{lem:disjointtrees} and \ref{lem:tamefibres}, and Theorem~\ref{thm:main}, while Corollary~\ref{coro:answerpu} results from and Lemmas~\ref{lem:disjointtrees}, \ref{lem:tamefibres} and \ref{lem:bounded}, and Theorem~\ref{thm:pu}.
Likewise, Corollaries~\ref{coro:strong} and \ref{coro:strongpu} follow by combining Lemma~\ref{lem:strong} with Theorems \ref{thm:main} and \ref{thm:pu}, respectively.

\begin{lemma}
\label{lem:disjointtrees}
Let $\mathcal{G}$ be a graph, and let $G$ be a group acting on $V(\mathcal{G})$ by graph automorphisms. Let $\mathcal{H}$ be the quotient graph $\mathcal{G}/G$ and $\pi:V(\mathcal{G})\to V(\mathcal{H})$ denote the quotient map $x\mapsto Gx$.

If $G\acts V(\mathcal{G})$ is free, then $\pi$ has the disjoint tree-lifting property.
\end{lemma}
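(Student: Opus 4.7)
The plan is to build $\mathcal{T}_y$ as a translate of $\mathcal{T}_x$ by a suitable group element and then use freeness to ensure vertex-disjointness.

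First, since $\pi(x) = \pi(y)$, pick $g\in G$ with $gx = y$; because $x\neq y$, one has $g\neq 1$. Using the tree-lifting property of the weak covering map $\pi$ (recalled in the paragraph preceding the statement), pick a lift $\mathcal{T}_x$ of $\mathcal{T}$ containing $x$. Then define $\mathcal{T}_y := g\cdot \mathcal{T}_x$, i.e.\ the subgraph of $\mathcal{G}$ with vertex set $\{gv: v\in V(\mathcal{T}_x)\}$ and edge set $\{\{gv,gw\}:\{v,w\}\in E(\mathcal{T}_x)\}$.

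Next I verify that $\mathcal{T}_y$ is a lift of $\mathcal{T}$ containing $y$. Since $g$ acts by graph automorphisms, $\mathcal{T}_y$ is a subtree of $\mathcal{G}$ that is graph-isomorphic to $\mathcal{T}_x$ via $v\mapsto gv$. Since $\pi\circ g = \pi$ by definition of the quotient map, one has $\pi(V(\mathcal{T}_y)) = \pi(V(\mathcal{T}_x)) = V(\mathcal{T})$ and the restriction of $\pi$ to $V(\mathcal{T}_y)$ is the composition of $g^{-1}$ with the bijection $\pi|_{V(\mathcal{T}_x)}\colon V(\mathcal{T}_x)\to V(\mathcal{T})$, hence is itself a bijection; the same goes for edges. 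So $\mathcal{T}_y$ is a bona fide lift of $\mathcal{T}$, and it contains $gx = y$.

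The one substantive step is vertex-disjointness, which is exactly where freeness enters. Suppose towards contradiction that some $v$ lies in $V(\mathcal{T}_x)\cap V(\mathcal{T}_y)$. Write $v = gv'$ with $v'\in V(\mathcal{T}_x)$. Then $\pi(v') = \pi(gv') = \pi(v)$, and since $\pi$ restricts to a bijection on $V(\mathcal{T}_x)$, this forces $v = v'$, i.e.\ $gv = v$. Freeness of the $G$-action then yields $g = 1$, contradicting $g\neq 1$. Hence $V(\mathcal{T}_x)\cap V(\mathcal{T}_y) = \varnothing$, and $\mathcal{T}_x,\mathcal{T}_y$ are the desired disjoint lifts. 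No step should present real difficulty; the only point to be careful about is identifying freeness as the precise hypothesis making the final contradiction go through.
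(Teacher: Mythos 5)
Your proof is correct and follows essentially the same route as the paper's: translate the lift $\mathcal{T}_x$ by the group element $g$ with $gx=y$, use $\pi\circ g = \pi$ to see that $g\mathcal{T}_x$ is a lift, and deduce vertex-disjointness by combining the injectivity of $\pi|_{V(\mathcal{T}_x)}$ with the freeness of the action (the paper phrases the last step as $g$ having no fixed point, which is equivalent to what you wrote). Your write-up is a bit more detailed in verifying that $g\mathcal{T}_x$ is a lift, but there is no substantive difference.
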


\begin{proof}
With the notation of Lemma~\ref{lem:disjointtrees}, let $x$ and $y$ be two distinct vertices of $\mathcal{G}$ such that $\pi(x)=\pi(y)$. Let $\mathcal{T}$ be a subtree of $\mathcal{H}$, and let $\mathcal{T}_x$ be a lift of $\mathcal{T}$ that contains $x$: recall that such a lift exists, as $\pi$ is a weak covering map. As $Gx=Gy$, let us take some $g\in G$ such that $gx=y$. Since $x$ and $y$ are distinct, $g$ is not the identity element. Therefore, by freeness of the action, $g$ has no fixed point.

We claim that $\mathcal{T}_y:=g\mathcal{T}_x$ is a lift of $\mathcal{T}$ that is vertex-disjoint from $\mathcal{T}_x$. It is indeed a lift, as $\forall z\in V(\mathcal{G}),~\pi(z)=\pi(gz)$. To prove vertex-disjunction, let $z\in V(\mathcal{T}_x)\cap gV(\mathcal{T}_x)$. Thus, one can pick $z_\star$ in $V(\mathcal{T}_x)$ such that $z=gz_\star$. As $\pi(z)=\pi(gz_\star)=\pi(z_\star)$, one has $z=z_\star$. Therefore, one has $z=gz$, which contradicts the fact that $g$ has no fixed point.
\end{proof}

\begin{lemma}
\label{lem:tamefibres}
Let $\mathcal{G}$ and $\mathcal{H}$ be quasi-transitive graphs. Let $\pi : V(\mathcal{G})\to V(\mathcal{H})$ be a non-injective weak covering map with the disjoint tree-lifting property.

Then $\pi$ has tame fibres.
\end{lemma}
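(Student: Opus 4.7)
My plan is to prove the lemma in two steps: first showing that every fibre of $\pi$ contains at least two elements, and then promoting this to the uniform bound required by tame fibres through a compactness/contradiction argument exploiting both quasi-transitivities.

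For the first step (all fibres non-trivial), since $\pi$ is non-injective I fix distinct $x_0, y_0 \in V(\mathcal{G})$ with common image $u_0 := \pi(x_0) = \pi(y_0)$. For any $u \in V(\mathcal{H})$, I take a geodesic path $\mathcal{T}$ from $u_0$ to $u$ in $\mathcal{H}$, viewed as a subtree; disjoint tree-lifting then yields vertex-disjoint lifts $\mathcal{T}_{x_0}$ and $\mathcal{T}_{y_0}$ containing $x_0$ and $y_0$ respectively, and their endpoints above $u$ supply two distinct elements of $\pi^{-1}(u)$.

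For the second step (uniform bound), I argue by contradiction. Assume $R(x) := \min\{d(x, y) : y \in \pi^{-1}(\pi(x)) \setminus \{x\}\}$ is unbounded, with $R(x_n) \to \infty$ along some sequence. Using quasi-transitivity of $\mathcal{G}$ and of $\mathcal{H}$ and passing to subsequences, I pick $\phi_n \in \Aut(\mathcal{G})$ and $\psi_n \in \Aut(\mathcal{H})$ with $\phi_n(x_n) = x^*$ and $\psi_n(\pi(x_n)) = u^*$ fixed. The composites $\hat\pi_n := \psi_n \circ \pi \circ \phi_n^{-1}$ are non-injective weak covering maps with disjoint tree-lifting, $\hat\pi_n(x^*) = u^*$, while every element of $\hat\pi_n^{-1}(u^*) \setminus \{x^*\}$ lies at $\mathcal{G}$-distance at least $R(x_n) \to \infty$ from $x^*$. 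A diagonal compactness argument (each $\hat\pi_n(x)$ belongs to the finite set $B_{d(x, x^*)}(u^*)$) then extracts, along a further subsequence, a pointwise limit $\hat\pi_\infty : V(\mathcal{G}) \to V(\mathcal{H})$ that is again a weak covering map with disjoint tree-lifting, and with $\hat\pi_\infty^{-1}(u^*) = \{x^*\}$ by construction. If $\hat\pi_\infty$ were non-injective, the first step applied to $\hat\pi_\infty$ would force $|\hat\pi_\infty^{-1}(u^*)| \geq 2$, the desired contradiction.

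The main obstacle is therefore to secure the non-injectivity of the limit: a priori, the non-injectivity witnesses of the $\hat\pi_n$'s could all escape to infinity, producing an injective limit (and since weak covering maps between graphs of matching local degrees are automatically strong covers, such an injective limit would actually be a graph isomorphism $\mathcal{G} \cong \mathcal{H}$). I would handle this by refining the normalisation before passing to the limit, selecting $\phi_n, \psi_n$ so that some non-injectivity witness of $\hat\pi_n$ is also confined to a bounded region around $x^*$. Such a witness can be produced by the first step together with disjoint tree-lifting applied to the fixed pair $(x_0, y_0)$ along a geodesic from $u_0$ to $\pi(x_n)$ in $\mathcal{H}$, and then stabilised in the pointwise limit by the finiteness of configurations in any bounded ball, thereby forcing $\hat\pi_\infty$ to be non-injective and closing the contradiction.
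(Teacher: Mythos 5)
Your plan goes a different route from the paper but arrives at essentially the same obstruction, and the part of the argument meant to handle that obstruction has a genuine gap. Step 1 (every fibre has at least two elements) is correct. The trouble is in Step 2, at the point where you must rule out that the pointwise limit $\hat\pi_\infty$ is injective.

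Your proposed fix --- refine the normalisation so that a non-injectivity witness stays within a bounded region of $x^*$ --- does not work. Any witness $\{a_n,b_n\}$ taken in the fibre $\pi^{-1}(\pi(x_n))$ (which is where your construction via $(x_0,y_0)$ and a geodesic to $\pi(x_n)$ lands) satisfies $\max(d(x_n,a_n),d(x_n,b_n))\geq R(x_n)\to\infty$, since at most one of the two can equal $x_n$ and any other element of that fibre is at distance at least $R(x_n)$ from $x_n$. And the fixed pair $(x_0,y_0)$ itself escapes: $R(x_n)\to\infty$ forces $d(x_n,x_0)\to\infty$ (by Step~1, no vertex has $R(\cdot)=\infty$, so $x_n$ cannot accumulate on a bounded set), hence $d(x^*,\phi_n(x_0))\to\infty$. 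The stabiliser of $x^*$ in $\Aut(\mathcal{G})$ acts by isometries fixing $x^*$, so no further adjustment of $\phi_n$ (subject to $\phi_n(x_n)=x^*$, which you need for $\hat\pi_\infty^{-1}(u^*)=\{x^*\}$) can drag a far-away witness into a fixed ball around $x^*$. In short, you cannot simultaneously impose that $\hat\pi_\infty^{-1}(u^*)$ is a singleton and that a fibre-collision survives near $x^*$; these two requirements pull the normalisation in opposite directions.

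The correct way to close the argument is exactly what the paper does at this juncture: observe that if $\hat\pi_\infty$ is injective it is a graph isomorphism $\mathcal{G}\cong\mathcal{H}$ (you already note this), and then rule out $\mathcal{G}\cong\mathcal{H}$ by a counting argument using the original map $\pi$: fix distinct $x,y$ with $\pi(x)=\pi(y)$; choose $r_0$ so large that $B_{r_0}(x_i)\ni x,y$ for every orbit representative $x_i$; pick $i$ minimising $|B_{r_0}(x_i)|$. Since $\pi$ surjects $B_{r_0}(x_i)$ onto $B_{r_0}(\pi(x_i))$ and identifies $x$ with $y$, one gets $|B_{r_0}(\pi(x_i))|<|B_{r_0}(x_i)|$, so the minimal $r_0$-ball cardinality in $\mathcal{H}$ is strictly smaller than in $\mathcal{G}$ --- incompatible with an isomorphism. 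With this inserted in place of your ``refined normalisation,'' your compactness argument would go through; as it stands, the decisive case (injective limit) is not handled. Note also that once one accepts the need for the ball-counting lemma, the paper's route is shorter: it bypasses the map-limit entirely and reduces immediately to ``$|B_r(x)|>|B_r(\pi(x))|$ for some uniform $r$,'' from which tame fibres follows by pigeonhole and path-lifting.
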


\begin{proof}
Let $(\mathcal{G},\mathcal{H},\pi)$ satisfy the assumptions of Lemma~\ref{lem:tamefibres}.
First, assume additionally that there is some $r$ such that for every $x\in V(\mathcal{G})$, one has $|B_r(x)|>|B_r(\pi(x))|$. Fix such an $r$. Let $x$ be any vertex of $\mathcal{G}$. As $\pi(B_r(x))= B_r(\pi(x))$, by the pigeonhole principle, one can pick two vertices $y$ and $z$ in $B_r(x)$ such that $\pi(y)=\pi(z)$. Pick a self-avoiding path of length at most $r$ from $\pi(y)$ to $\pi(x)$ in $B_r(\pi(x))$. As $\pi$ has the disjoint tree-lifting property, one can obtain two vertex-disjoint lifts of this path with one starting at $y$ and the other at $z$. Each of these paths ends inside $\pi^{-1}(\{\pi(x)\})\cap B_{2r}(x)$: therefore, this set contains at least one vertex distinct from $x$, thus establishing the tameness of fibres with $R:=2r$.

Let us now prove that the assumptions of the lemma imply the existence of such an $r$. Pick one vertex in each $\mathsf{Aut}(\mathcal{G})$-orbit, thus yielding a finite set $\{x_1,\dots,x_m\}\subset V(\mathcal{G})$. Define $\{u_1,\dots,u_n\}\subset V(\mathcal{H})$ by doing the same in $\mathcal{H}$. Proceeding by contradiction and as $\pi$ is a weak covering map, we may assume that for every $r$, there is some $x\in V(\mathcal{G})$ such that $B_r(x)$ and $B_r(\pi(x))$ are isomorphic as rooted graphs. As a result, for every $r$, there are some $i$ and $j$ such that $B_r(x_i)$ and $B_r(u_j)$ are isomorphic as rooted graphs. As $i$ and $j$ can take only finitely many values, there is some $(i_0,j_0)$ such that for infinitely many values of $r$ --- hence all values of $r$ ---, the rooted graphs $B_r(x_{i_0})$ and $B_r(u_{j_0})$ are isomorphic. It results from local finiteness and diagonal extraction (or equivalently from the fact that the local topology on locally finite connected rooted graphs is Hausdorff) that $\mathcal{G}$ and $\mathcal{H}$ are isomorphic.

This is a contradiction for the following reason. There are two vertices $x$ and $y$ in $\mathcal{G}$ such that $\pi(x)=\pi(y)$: fix such a pair $(x,y)$.  For $r_0$ large enough, for all $i\leq m$, the $r_0$-ball centred at $x_i$ contains $x$ and $y$. Pick such an $r_0$ and pick $i$ such that the cardinality of $B_{r_0}(x_i)$ is minimal: as $\pi(x)=\pi(y)$, the cardinality of $B_{r_0}(\pi(x_i))$ is strictly less than that of $B_{r_0}(x_i)$. Therefore, the minimal cardinality of an $r_0$-ball is not the same for $\mathcal{H}$ and $\mathcal{G}$.
\end{proof}
\vspace{-0.2cm}
\begin{rem}
Notice that in the above proof we only needed to use that we can lift \emph{paths} disjointly.
\end{rem}

\begin{lemma}
\label{lem:strong}
Any strong covering map has the disjoint tree-lifting property.
\end{lemma}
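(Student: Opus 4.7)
The plan is to exploit the uniqueness clause in the strong lifting property to make path-lifts canonical, and hence to make tree-lifts canonical, which will force two lifts starting at distinct vertices in the same fibre to be vertex-disjoint.

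First I will record a unique path-lifting principle: if $\gamma = (u_0, u_1, \ldots, u_n)$ is a walk in $\mathcal{H}$ and $\tilde u_0 \in \pi^{-1}(\{u_0\})$, then there is a \emph{unique} walk $\tilde\gamma = (\tilde u_0, \tilde u_1, \ldots, \tilde u_n)$ in $\mathcal{G}$ starting at $\tilde u_0$ and satisfying $\pi(\tilde u_i) = u_i$ for every $i$. Existence comes from the weak lifting property (which is implied by the strong one), and uniqueness is immediate by induction on $i$: once $\tilde u_i$ is fixed, the strong lifting property at $\tilde u_i$ says that there is exactly one neighbour of $\tilde u_i$ mapping to $u_{i+1}$.

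Next, fix a subtree $\mathcal{T}$ of $\mathcal{H}$ and $x \in V(\mathcal{G})$ with $\pi(x) \in V(\mathcal{T})$. For each $u \in V(\mathcal{T})$, let $\gamma_u$ denote the unique path from $\pi(x)$ to $u$ inside $\mathcal{T}$, and let $\phi_x(u)$ be the endpoint of the unique $\mathcal{G}$-lift of $\gamma_u$ starting at $x$. The map $\phi_x$ is injective because $\pi \circ \phi_x = \mathrm{id}_{V(\mathcal{T})}$, and the lifts of the edges of $\gamma_u$ together form a subgraph $\mathcal{T}_x \subset \mathcal{G}$ on which $\pi$ restricts to a graph isomorphism onto $\mathcal{T}$; so $\mathcal{T}_x$ is a lift of $\mathcal{T}$ containing $x$. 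Build $\mathcal{T}_y$ analogously from $y$.

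It remains to show that $V(\mathcal{T}_x) \cap V(\mathcal{T}_y) = \varnothing$ whenever $x \ne y$ and $\pi(x) = \pi(y)$. Suppose instead that $z \in V(\mathcal{T}_x) \cap V(\mathcal{T}_y)$, set $u := \pi(z) \in V(\mathcal{T})$, and consider the path $\bar\gamma_u$ in $\mathcal{T}$ obtained by reversing $\gamma_u$, so that $\bar\gamma_u$ runs from $u$ to $\pi(x) = \pi(y)$. By construction of the canonical lifts, $\mathcal{T}_x$ contains a lift of $\bar\gamma_u$ starting at $z$ and ending at $x$, and $\mathcal{T}_y$ contains a lift of $\bar\gamma_u$ starting at $z$ and ending at $y$. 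Unique path-lifting from $z$ forces these two lifts to coincide, hence $x = y$, contradiction. The only point requiring some care is keeping the two uniqueness statements straight — uniqueness of neighbours in a single fibre versus uniqueness of path-lifts — and checking that $\mathcal{T}_x$ is genuinely a subgraph of $\mathcal{G}$ isomorphic to $\mathcal{T}$ (which rests on injectivity of $\phi_x$, immediate from $\pi \circ \phi_x = \mathrm{id}$). There is no serious obstacle beyond this bookkeeping.
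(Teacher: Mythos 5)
Your proof is correct. You use the same underlying mechanism as the paper — the uniqueness clause of the strong lifting property — but you organise the argument differently: you establish a unique path-lifting principle, use it to build canonical lifts $\mathcal{T}_x$ and $\mathcal{T}_y$ (by lifting, from $x$ and from $y$, the in-tree path to each vertex of $\mathcal{T}$), and then observe that a common vertex $z$ would force the lifts of the reversed path from $z$ to coincide, giving $x=y$. The paper avoids constructing canonical lifts altogether: it takes two arbitrary lifts $\mathcal{T}_x, \mathcal{T}_y$, shows that the intersection $V(\mathcal{T}_x)\cap V(\mathcal{T}_y)$ is closed under taking $\mathcal{T}_x$-neighbours (again by strong-lifting uniqueness at each vertex), and concludes by connectedness of $\mathcal{T}_x$ that the intersection is all-or-nothing, whence $x\in V(\mathcal{T}_y)$ and $x=y$. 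The paper's version is slightly slicker, since it never has to verify that the canonical lift $\mathcal{T}_x$ is really a subtree isomorphic to $\mathcal{T}$ under $\pi$ — a small amount of bookkeeping that your construction requires and that you appropriately flag — and it proves the marginally stronger statement that \emph{any} two lifts containing $x$ and $y$ respectively are either disjoint or identical. Both arguments are standard translations to graphs of familiar covering-space reasoning, and both are complete.
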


\begin{proof}
Let $\pi:\mathcal{G}\to\mathcal{H}$ denote a strong covering map. Let $x$ and $y$ be two vertices of $\mathcal{G}$ such that $\pi(x)=\pi(y)$. Let $\mathcal{T}$ be a subtree of $\mathcal{H}$, and let $\mathcal{T}_x$ and $\mathcal{T}_y$ be lifts of $\mathcal{T}$ such that $x$ belongs to $V(\mathcal{T}_x)$ and $y$ to $V(\mathcal{T}_y)$. Assume that $V(\mathcal{T}_x)\cap V(\mathcal{T}_y)\not= \varnothing$. Let us prove that $x=y$.

As $\mathcal{T}_x$ is connected, it suffices to prove that if $z_0$ belongs to $V(\mathcal{T}_x)\cap V(\mathcal{T}_y)$, then all its $\mathcal{T}_x$-neighbours belong to $V(\mathcal{T}_x)\cap V(\mathcal{T}_y)$. But this is the case: indeed, any $\mathcal{T}_x$-neighbour $z_1$ of $z_0$ is, by the strong lifting property, the unique neighbour $z_\star$ of $z_0$ such that $\pi(\{z_0,z_\star\})=\pi(\{z_0,z_1\})$, so that $\pi^{-1}(\{\pi(z_1)\})\cap V(\mathcal{T}_y)=\{z_1\}$.
\end{proof}

\saut

In the following lemma, we show that the assumption of bounded fibres in Theorems~\ref{thm:pu} and Corollary~\ref{coro:strongpu} can actually be relaxed to that of \defini{fibres of bounded \emph{cardinality}}, i.e. the condition that $\sup_{u\in V(\mathcal{H})}|\pi^{-1}(\{u\})|<\infty$.

\begin{lemma}
	\label{lem:bounded}
	Let $\mathcal{G}$ and $\mathcal{H}$ be quasi-transitive graphs. Assume that there is a non-injective weak covering map $\pi:V(\mathcal{G})\to V(\mathcal{H})$ with the disjoint tree-lifting property and fibres of bounded cardinality.

	Then there is a map $\pi_\star:V(\mathcal{G})\to V(\mathcal{H})$ satisfying all these conditions and that furthermore has bounded and tame fibres.
\end{lemma}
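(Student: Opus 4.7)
The plan is to show that $\pi$ itself has bounded fibres under the hypotheses, so that we may simply set $\pi_\star := \pi$. The argument combines the tameness provided by Lemma~\ref{lem:tamefibres} with the bounded cardinality of fibres, via an iterated refinement controlled by fibre size.

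First, applying Lemma~\ref{lem:tamefibres} to $\pi$ yields an integer $R_0 \geq 1$ such that every vertex of $\mathcal{G}$ admits a distinct $\pi$-partner within distance $R_0$. Let $N := \sup_{u\in V(\mathcal{H})} |\pi^{-1}(\{u\})|$, and define on $V(\mathcal{G})$ the equivalence relation $\approx$ as the transitive closure of the symmetric relation $\{(x,y) : \pi(x)=\pi(y),\ d(x,y)\leq R_0\}$. Every $\approx$-class lies inside a single $\pi$-fibre, hence contains at most $N$ vertices, and being composed of elements linked by successive steps of length at most $R_0$, it has diameter at most $R_0(N-1)$. Tameness forces every $\approx$-class to contain at least two elements, so each $\pi$-fibre splits into at most $\lfloor N/2 \rfloor$ such $\approx$-classes.

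Next, form the quotient graph $\mathcal{G}_1 := \mathcal{G}/\approx$ with its natural quotient structure; the quotient map $q_1:\mathcal{G}\to\mathcal{G}_1$ is then a weak covering with bounded fibres, and $\pi$ factors as $\pi = \bar\pi_1\circ q_1$ for an induced weak covering $\bar\pi_1 : \mathcal{G}_1 \to \mathcal{H}$ whose fibres have cardinality at most $\lfloor N/2 \rfloor$. After checking that $\mathcal{G}_1$ remains quasi-transitive (there being only finitely many rooted-ball isomorphism types, inherited from $\mathcal{G}$ through the bounded structure of $\approx$-classes) and that $\bar\pi_1$ inherits the disjoint tree-lifting property, Lemma~\ref{lem:tamefibres} applies to $\bar\pi_1$ and the entire construction can be iterated. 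Since the fibre cardinality strictly decreases---indeed, roughly halves---at each step, after at most $\lceil \log_2 N\rceil$ iterations the induced map $\bar\pi_k$ has fibres of size one, hence is injective; being a weak covering onto the connected graph $\mathcal{H}$ it is also surjective, and therefore a graph isomorphism. The composed quotient map $\mathcal{G}\to \mathcal{G}_k\cong \mathcal{H}$ thus agrees with $\pi$ under this identification, and its fibres are the $\pi$-fibres. A straightforward iterated triangle-inequality argument---using that each intermediate $q_i$ is a weak covering whose fibres have bounded diameter, so that paths in $\mathcal{G}_i$ can be lifted to paths of the same length in $\mathcal{G}_{i-1}$---then yields a uniform bound on the diameter of $\pi$-fibres in $\mathcal{G}$.

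The main technical obstacle is the propagation of the disjoint tree-lifting property through each quotient step: given two vertex-disjoint lifts in $\mathcal{G}$ of a subtree of $\mathcal{H}$ with roots in two $\approx$-inequivalent fibre-mates, one needs to choose these lifts so that at every vertex of the tree the corresponding pair of lifted vertices remain in different $\approx$-classes, thus ensuring vertex-disjointness of the images in $\mathcal{G}_1$. This is handled by a layered choice that exploits, at each level of the tree, the flexibility given by the disjoint tree-lifting of $\pi$ combined with the multiplicity of $\approx$-classes inside each fibre, and is the most delicate bookkeeping step of the proof.
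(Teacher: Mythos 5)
Your strategy differs from the paper's in a fundamental way, and the difference is where the gap lies. The paper does \emph{not} try to show that $\pi$ itself has bounded fibres: it explicitly handles the case where $\pi$ does not, by replacing $\pi$ with a genuinely different map $\pi_\star:V(\mathcal{G})\to V(\mathcal{H})$. Concretely, it finds vertices $x_n$ where each fibre meets $B_n(x_n)$ in at most $K-1$ points, uses the quasi-transitivity of $\mathcal{G}$ to bring the $x_n$ back into a fixed finite fundamental domain via automorphisms $\varphi_n$, sets $x\,\mathcal{R}_n\,y\iff \pi(\varphi_n x)=\pi(\varphi_n y)$, and passes to a limit $\mathcal{R}$. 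The resulting quotient $\mathcal{G}/\mathcal{R}$ is again isomorphic to $\mathcal{H}$ (not some intermediate graph), so one gets a new weak covering map $\mathcal{G}\to\mathcal{H}$ with strictly smaller maximal fibre cardinality, and the iteration terminates for that reason. You instead try to prove the stronger statement that the original $\pi$ already has bounded fibres, by factoring $\pi$ through a chain $\mathcal{G}\to\mathcal{G}_1\to\cdots\to\mathcal{G}_k\cong\mathcal{H}$.

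The critical gap in your route is the claim that $\mathcal{G}_1=\mathcal{G}/\!\approx$ is quasi-transitive. The relation $\approx$ is defined using $\pi$, and $\pi$ is \emph{not} preserved by arbitrary automorphisms of $\mathcal{G}$; so automorphisms of $\mathcal{G}$ do not descend to $\mathcal{G}_1$, and $\mathrm{Aut}(\mathcal{G}_1)$ may well be tiny. Your parenthetical justification conflates ``finitely many rooted-ball isomorphism types'' with quasi-transitivity, which is not a valid implication in general. Without quasi-transitivity of $\mathcal{G}_1$, you cannot apply Lemma~\ref{lem:tamefibres} to $\bar\pi_1$, and the iteration fails at the very first step. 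The paper's construction sidesteps this entirely by keeping the target equal to $\mathcal{H}$ and drawing the needed automorphisms from $\mathrm{Aut}(\mathcal{G})$, whose quasi-transitivity is a hypothesis. A secondary problem: your final diameter bound relies on the intermediate quotients $q_i$ being weak covering maps, but $q_1:\mathcal{G}\to\mathcal{G}_1$ need not satisfy the weak lifting property (an edge between two $\approx$-classes need not lift at \emph{every} representative of the source class), so the ``lift paths back'' step is unjustified as stated. And the delicate disjoint tree-lifting propagation you flag at the end is acknowledged but not actually carried out. For all these reasons the proposal does not establish the lemma.
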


\begin{rem}
Concerning Corollary~\ref{coro:strongpu}, the boundedness assumption can be relaxed further to the condition that $\pi^{-1}(\{o\})$ is finite. Indeed, for a \emph{strong} covering map, the cardinality of $\pi^{-1}(\{u\})$ does not depend on $u$. 
\end{rem}

\begin{proof}
	First, let us prove that there is a weak covering map $\pi_\star:V(\mathcal{G})\to V(\mathcal{H})$ with the disjoint tree-lifting property and bounded fibres. If $\pi$ has bounded fibres, then we are done. Thus, assume that this is not the case. Let $K$ denote the maximal cardinality of a fibre, i.e.~$K=\max_{u\in V(\mathcal{H})} |\pi^{-1}(\{u\})|$. Since $\pi$ does not have bounded fibres and since $u\mapsto \mathsf{diam}(\pi^{-1}(\{u\})$ is 2-Lipschitz, for every $n$, there is some $x_n\in V(\mathcal{G})$ such that
	$$
	\forall u \in V(\mathcal{H}),~\left|\pi^{-1}(\{u\})\cap B_n(x_n)\right|\leq K-1. 
	$$
As $\mathcal{G}$ is quasi-transitive, one can pick $F$ some finite set of vertices of $\mathcal{G}$ that intersects every $\mathsf{Aut}(\mathcal{G})$-orbit. For every $n$, pick some graph automorphism $\varphi_n$ of $\mathcal{G}$ such that $\varphi^{-1}(x_n)\in F$, and define the equivalence relation $\mathcal{R}_n$ on $V(\mathcal{G})$ by:
$$
x \mathcal{R}_n y \iff \pi\left(\varphi_n(x)\right) = \pi\left(\varphi_n(y)\right).
$$
By taking a pointwise limit of these relations along a converging subsequence, one can endow $V(\mathcal{G})$ with an equivalence relation $\mathcal{R}$ such that:
	\begin{itemize}
		\item $\mathcal{G}/\mathcal{R}$ is isomorphic to $\mathcal{H}$,
		\item the projection $\pi_1 : V(\mathcal{G})\to V(\mathcal{G})/\mathcal{R}$ is a weak covering map with the disjoint tree-lifting property,
		\item every $\mathcal{R}$-class has cardinality at most $K-1$.
	\end{itemize}If $\pi_1$ has bounded fibres, then we are done. Otherwise, iterate the process, applying the same construction to $\pi_1$ instead of $\pi$. Since the maximal cardinality of a fibre cannot decrease forever, this process stops at some suitable $\pi_\star$.
	
	Now, we need to show that $\pi_\star$ has tame fibres. Notice that the weak covering map $\pi_\star$ cannot be injective, as $\mathcal{G}$ and $\mathcal{H}$ are not isomorphic: see the last paragraph of the proof of Lemma~\ref{lem:tamefibres}. As $\pi_\star$ has the disjoint tree-lifting property, every $\pi_\star$-fibre $\pi_\star^{-1}(\{u\})$ has cardinality at least 2. As $\pi_\star$ has bounded fibres, this implies that $\pi_\star$ has tame fibres.
\end{proof}

\section{Proof of Theorems~\ref{thm:nonstrictpu} and \ref{thm:pu}}
\label{sec:pu}

As a warm-up, let us first prove Theorem~\ref{thm:nonstrictpu}.

\subsection{Proof of Theorem~\ref{thm:nonstrictpu}}
\label{subsec:easypu}

In what follows, we will denote by $\Pp_p$ the percolation measure of parameter $p$ on both graphs $\mathcal{G}$ and $\mathcal{H}$, but this will not cause any confusion. For $A$ and $B$ two subsets of the vertices of a graph, we write ``$A \leftrightarrow B$'' for the event that there is an open path intersecting both $A$ and $B$. Similarly, ``$A\leftrightarrow\infty$'' will denote the event that there is an infinite (self-avoiding) open path intersecting $A$.

Let $\mathcal{G}$, $\mathcal{H}$ and $\pi$ be as in Theorem~\ref{thm:nonstrictpu}. The coupling used in \cite{bspercobeyond} to prove the monotonicity of $p_c$ under covering maps yields straightforwardly the following fact: for any two finite subsets $A,B\subset V(\mathcal{H})$ one has
\begin{equation}
\label{similiprop}
\Pp_{p}\big[\pi^{-1}(A)\leftrightarrow \pi^{-1}(B) \big]\geq \Pp_{p}\big[A \leftrightarrow B\big].
\end{equation}

Assume that $p>p_u(\mathcal{H})$. By uniqueness of the infinite cluster at $p$ and the Harris-FKG inequality, one has 
\begin{align*}
\Pp_p[ B_\ell(u) \leftrightarrow B_\ell (v)]&\geq \Pp_p[ B_\ell(u) \leftrightarrow\infty,~ B_\ell (v)\leftrightarrow\infty]\\
&\geq \Pp_p[ B_\ell(u) \leftrightarrow\infty]\Pp_p[ B_\ell(v) \leftrightarrow\infty]
\end{align*} 
for any two vertices $u,v\in V(\mathcal{H})$. This implies, by quasi-transitivity, that
$$\lim_{\ell\rightarrow\infty}~\inf_{u,v \in V(\mathcal{H})} \Pp_p[ B_\ell(u) \leftrightarrow B_\ell (v)]=1.$$
Let $K$ be given by the boundedness of the fibres. As for any vertex $x\in V(\mathcal{G})$, one has $\pi^{-1}(B_\ell(\pi(x)))\subset B_{\ell+K}(x)$, inequality~(\ref{similiprop}) and the previous equation imply that 
$$\lim_{\ell\rightarrow\infty}~\inf_{x,y \in V(\mathcal{G})} \Pp_p[ B_{\ell}(x) \leftrightarrow B_{\ell} (y)]=1.$$ 
Now simply remind that the above equation guarantees that $p\geq p_u(\mathcal{G})$, see \cite{Sch99}.

\subsection{Proof of Theorem~\ref{thm:pu}}

The proof of Theorem~\ref{thm:pu} follows quite closely that of Theorem~\ref{thm:main}.

\saut

Let $\mathcal{G}$, $\mathcal{H}$ and $\pi$ be as in Theorem~\ref{thm:pu}.
Let $r$ be a positive integer.
We use the $(p,s)$-model of Section~\ref{sec:proofmain}, except that we now initialise it at any finite set $A$, instead of just at a single point $o$. When using the $(p,s)$-model initialised at some finite set $A\subset V(\mathcal{H})$, if $B$ is a subset of $V(\mathcal{H})$, we write ``$A \leadsto B$'' for the event ``$\mathcal{C}_{A}\cap B\not= \varnothing$''.

\saut

Here are two propositions, which are reminiscent of Propositions~\ref{propone} and \ref{proptwo}. 

\begin{prop}
\label{proponepu}There is some choice of $r\geq 1$ such that the following holds: for every $\varepsilon >0$, there is some $s\in (0,1)$ such that for every $p\in [\varepsilon, 1]$, for every non-empty finite subset $A'$ of $V(\mathcal{G})$, the random set $\mathcal{C}_{\mathcal{H}}^{p,s}(\pi(A'))$ is stochastically dominated by $\pi\left(\mathcal{C}_{\mathcal{G}}^{p}(A')\right)$. In particular, for any two finite subsets $A,B\subset V(\mathcal{H})$, one has
$$\Pp_{p}\big[\pi^{-1}(A)\leftrightarrow \pi^{-1}(B) \big]\geq \Pp_{p,s}\big[A \leadsto B \big].$$
\end{prop}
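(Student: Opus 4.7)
The plan is to mirror the proof of Proposition~\ref{propone} almost verbatim, simply replacing the starting vertex $o'$ (and its image $o=\pi(o')$) by the finite set $A'$ (and its image $\pi(A')$). First, I would check that the hypotheses of Lemma~\ref{lempattern} are met in the present setting: quasi-transitivity of $\mathcal{G}$ and $\mathcal{H}$ gives bounded degrees, and Lemma~\ref{lem:tamefibres} upgrades our non-injective weak covering map with bounded fibres and the disjoint tree-lifting property to one having tame fibres. Hence a valid $r\geq 1$ exists, and for each $\varepsilon>0$ the same constants $M$ and $s$ as in Section~\ref{sec:proofone} are available, since they depend only on $r$, the maximum degree of $\mathcal{G}$, and $\varepsilon$, not on the initial set.

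Next, I would run the same exploration process, modified only by setting $C_0:=\pi(A')$, $C'_0:=A'$, and by rephrasing Condition~(C) as ``every element of $C'_{\ell,n}$ is connected to $A'$ by an $\eta$-open path''. The inductive verification of Conditions~(A)--(E) proceeds identically to the single-vertex case, because no substep of the algorithm ever used that the starting set was a singleton: each odd step only relies on the chosen well-orderings of edges/vertices and on the current state $(C_{\ell,n},C'_{\ell,n})$, while each even step invokes Condition~(E) to pick some $x\in C'_{2K+1}\cap \pi^{-1}(\{u\})$ and then Lemma~\ref{lempattern} applied to that $x$. At the end of Step~$\infty$, the variable $\eta$ has distribution $\mathsf{Ber}(\hat{p})^{\otimes E(\hat{\mathcal{G}})}$, the variable $\alpha$ has distribution $\mathsf{Ber}(s)^{\otimes V(\mathcal{H})}$ and is independent of $\omega$, and $C_\infty$ is distributed as $\mathcal{C}_\mathcal{H}^{p,s}(\pi(A'))$ while $C'_\infty\subset\mathcal{C}_\mathcal{G}^{p}(A')$ with $\pi(C'_\infty)=C_\infty$. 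This yields the desired stochastic domination.

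For the ``in particular'' statement, given finite $A,B\subset V(\mathcal{H})$, pick any finite $A'\subset V(\mathcal{G})$ with $\pi(A')=A$ (one lift per element of $A$, available by surjectivity of $\pi$). On the event $\{A\leadsto B\}$ in the $(p,s)$-model, $\mathcal{C}_\mathcal{H}^{p,s}(A)$ meets $B$; by the domination there is then a coupling under which the $p$-cluster of $A'$ in $\mathcal{G}$ meets $\pi^{-1}(B)$, and since $A'\subset \pi^{-1}(A)$ this forces $\pi^{-1}(A)\leftrightarrow \pi^{-1}(B)$ under $\Pp_p$.

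The adaptation is essentially bookkeeping; the only subtle point to monitor is that Condition~(E) persists throughout both odd and even steps, so that the choice of $x\in C'_{2K+1}\cap \pi^{-1}(\{u\})$ remains legitimate even when several vertices of $A'$ share the same $\pi$-image, and that Lemma~\ref{lempattern} is a purely local statement at $x$ and therefore insensitive to the global initialisation.
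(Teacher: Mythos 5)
Your proposal is correct and follows exactly the route taken in the paper: the authors prove Proposition~\ref{proponepu} by noting that it is identical to the proof of Proposition~\ref{propone} once the exploration is initialised at $(A',\pi(A'))$ rather than $(\{o'\},\{o\})$, together with the observation (via Lemma~\ref{lem:tamefibres}) that the hypotheses of Theorem~\ref{thm:pu} give tame fibres. Your added bookkeeping about Conditions (A)--(E) and the derivation of the ``in particular'' inequality are consistent with, and merely spell out, what the paper leaves implicit.
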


Given a positive integer $r$, we say that a subset $B$ of $V(\mathcal{H})$ is \defini{$\mathbf{r}$-nice} if it is finite, non-empty, and if for every $u \in V(\mathcal{H})\backslash B$, there is some vertex $v$ of $\mathcal{H}$ such that $B_r(v)$ contains $u$ and does not intersect $B$.

\begin{prop}
\label{proptwopu}
For every $r\geq 1$ and $s,\epsilon>0$, there exists $\delta>0$ such that the following holds: for every $p\in [\epsilon,1-\epsilon]$ and any two non-empty finite subsets $A, B\subset V(\mathcal{H})$ such that $B$ is $r$-nice and  $d(A,B)>3r$,~\footnote{recall that $d(A,B):=\min \{d(u,v):u\in A, v\in B\}$}  one has
$$
\Pp_{p,s}\big[A \leadsto B \big]\geq \Pp_{p+\delta}\big[A \leftrightarrow B\big].
$$
\end{prop}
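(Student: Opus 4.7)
The plan is to mirror the proof of Proposition~\ref{proptwo}. Set $\theta(p,s) := \Pp_{p,s}[A\leadsto B]$. Since the event $\{A\leadsto B\}$ is increasing in both $\omega$ and $\alpha$ and depends on only finitely many coordinates, Margulis--Russo yields
$$\partial_p\theta \;=\; \sum_e \Pp_{p,s}[e\text{ is $p$-pivotal for }\{A\leadsto B\}],\qquad \partial_s\theta \;=\; \sum_x \Pp_{p,s}[x\text{ is $s$-pivotal for }\{A\leadsto B\}].$$
The goal is a uniform differential inequality $\partial_s\theta \geq c\,\partial_p\theta$ on $[\epsilon,1-\epsilon]^2$, with a constant $c=c(\epsilon,r)>0$ that does not depend on $(A,B)$. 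Integrating along the line segment $t\mapsto (p+c(s-t),t)$ for $t\in[0,s]$ gives the proposition: indeed, at $s=0$ the even steps of the exploration are trivial, so $\Pp_{p+cs,0}[A\leadsto B] = \Pp_{p+cs}[A\leftrightarrow B]$, whence $\theta(p,s)\geq \Pp_{p+cs}[A\leftrightarrow B]$ with $\delta := cs$.

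The differential inequality reduces to a deterministic statement in the spirit of Lemma~\ref{lem:proptwo}: there exists $R\geq 1$ such that for every configuration $(\omega,\alpha)$ in which an edge $e$ is $p$-pivotal for $\{A\leadsto B\}$, one can construct a configuration $(\omega',\alpha')$ differing from $(\omega,\alpha)$ only inside $B_R(e)$, together with a vertex $z\in B_R(e)$ that is $s$-pivotal for $\{A\leadsto B\}$ in $(\omega',\alpha')$. The bound $p,s\in [\epsilon,1-\epsilon]$ converts this pointwise statement into a comparison of expected pivotal counts via a uniformly bounded Radon--Nikodym factor on $B_R(e)$. To produce $z$ and $(\omega',\alpha')$, I would follow the strategy of Lemma~\ref{lem:proptwo}: first erase all $\alpha$-boosts inside $B_R(e)$ one by one (terminating if the event $\{A\leadsto B\}$ turns off, which gives the desired $s$-pivotal $z$ for free), leaving an $\alpha'$ vanishing on $B_R(e)$ for which $e$ remains $p$-pivotal. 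Since $d(A,B)>3r$, we have either $d(e,A)>r$ or $d(e,B)>r$. If $d(e,A)>r$, use the $r$-niceness of $B$ to locate a vertex $z$ such that an endpoint $x$ of $e$ lies in $B_r(z)$ and $B_r(z)\cap B=\varnothing$; if moreover $d(e,A)>3r$, the ball $B_r(z)$ can be chosen inside $V(\mathcal H)\setminus A$ as well. Then carry out the Case~a surgery of Lemma~\ref{lem:proptwo}: close $B_{r+1}(z)$ in $\omega$, locate some $u\in S_{r+1}(z)$ still connected to $A$ in this modification (which exists by the exploration argument, since $\alpha'\cap B_{2r+1}(z)=\varnothing$ forces the reaching step to be odd), and reopen $B_r(z)$ together with the bridging edge $\{u,v\}$ for $v\in S_r(z)$ a neighbour of $u$. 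If instead $d(e,A)\leq r$, then $d(e,B)>2r$ by the separation hypothesis, so one performs the Case~b surgery around the endpoint of $e$ lying closest to $A$. The verification that the chosen $z$ is $s$-pivotal in $(\omega',\alpha')$ then goes through exactly as in Lemma~\ref{lem:proptwo}, using that $\alpha'$ vanishes on $B_R(e)$ so that only the boost at $z$ can propagate the exploration across the closed annulus $B_{r+1}(z)\setminus B_r(z)$.

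The main obstacle is the case analysis itself: ensuring that the local surgery produces an $s$-pivotal vertex uniformly over all admissible $(A,B)$, in spite of the directional asymmetry of the exploration and the fact that $A$ is an arbitrary finite set rather than a single vertex. The two hypotheses in the statement are precisely tailored for this: the $r$-niceness of $B$ supplies the separating ball needed when $e$ is close to $B$, and the separation $d(A,B)>3r$ guarantees that whenever $e$ is close to $A$ it is far enough from $B$ for the Case~b surgery to propagate cleanly up to $B$, and conversely that whenever $e$ is close to $B$ it is far from $A$, so the exploration reaches $S_{r+1}(z)$ from the $A$-side through an odd step.
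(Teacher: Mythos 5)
Your overall strategy---Margulis--Russo, a uniform differential inequality $\partial_s\theta\geq c\,\partial_p\theta$ on $[\epsilon,1-\epsilon]^2$, reduction to a deterministic pivotal-swapping lemma, and a case split on $d(e,A)$ using the $r$-niceness of $B$ and the separation $d(A,B)>3r$---is exactly the paper's. But there is a real gap at the very first step: the event $\{A\leadsto B\}$ does \emph{not} depend on finitely many coordinates. Since $A$ and $B$ are finite but the graph is infinite, the exploration cluster $\mathcal{C}_A(\omega,\alpha)$ can wander arbitrarily far before reaching $B$, and for $p$ in the permitted range (which extends up to $1-\epsilon$, well above $p_c$) this cluster may even be infinite. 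So you cannot apply the Margulis--Russo formula to $\theta(p,s)=\Pp_{p,s}[A\leadsto B]$ directly; the partial derivatives you write down are not a priori given by sums of pivotal probabilities, and it is not even clear that $\theta$ is differentiable.

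The fix, which the paper implements, is to introduce finite-dimensional approximations $\mathcal{E}^{A,B}_L := \{(\omega,\alpha)\colon \mathcal{C}_A(\omega_L,\alpha_L)\cap B\neq\varnothing\}$, where $\omega_L,\alpha_L$ agree with $\omega,\alpha$ on $B_L(o)$ and vanish outside. One proves the differential inequality for $\theta_L:=\Pp_{p,s}[\mathcal{E}^{A,B}_L]$ uniformly in $L\geq L_0(A,B)$, then lets $L\to\infty$. This truncation is not merely cosmetic: the deterministic lemma must be stated for $\mathcal{E}^{A,B}_L$, the threshold $L_0(A,B)$ must be chosen so that $A\cup B\subset B_L(o)$ with $d(A\cup B,S_L(o))>3r$, and the case analysis acquires a third sub-case (when the pivotal edge $e$ sits within distance $r$ of $S_L(o)$, where one picks $z$ on a geodesic toward $o$) that does not appear in your sketch. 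Aside from this, your case split and the surgeries you describe match the paper's Lemma~\ref{lem:proptwopu}; the auxiliary clause ``if moreover $d(e,A)>3r$'' in your Case a is unnecessary (when $d(x,B)\leq r$ the bound $d(A,B)>3r$ already forces $B_r(z)\cap A=\varnothing$), but that is a minor inaccuracy rather than an error.
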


\saut

Proposition~\ref{proponepu} is proved exactly as Proposition~\ref{propone}, except that the process is initialised at $(A',\pi(A'))$ instead of $(\{o'\},\{o\})$. Recall that the assumptions of Theorem~\ref{thm:pu} imply that $\pi$ has tame fibres. 

In Section~\ref{subsec:two}, we explain  how to adjust the proof of Proposition~\ref{propone} in order to get Proposition~\ref{proptwopu}.

\vspace{0.44cm}

\begin{proofpu}
If $p_u(\mathcal{H})=1$, then the conclusion holds trivially, so we can assume that $p_u(\mathcal{H})<1$.
Since in addition $p_u(\mathcal{H})\geq p_c(\mathcal{H})>0$, we can find some $\varepsilon>0$ such that $p_u(\mathcal{H})\in (\epsilon,1-\epsilon)$.
By Proposition~\ref{proponepu}, we can pick $r\in \N$ and $s\in (0,1)$ such that for every $p\in [\varepsilon, 1]$, for any two non-empty finite subsets $A,B$ of $V(\mathcal{H})$, one has 
$$
\Pp_{p}\big[\pi^{-1}(A)\leftrightarrow \pi^{-1}(B) \big]\geq \Pp_{p,s}\big[A \leadsto B \big].
$$

By applying Proposition~\ref{proptwopu} to some parameter $p\in (\epsilon,1-\epsilon)$ that satisfies $p<p_u(\mathcal{H})<p+\delta=:q$, we get that for any two non-empty finite subsets $A, B\subset V(\mathcal{H})$ such that $B$ is $r$-nice and $d(A,B)>3r$, one has
$$
\Pp_{p,s}\big[A \leadsto B \big]\geq \Pp_{q}\big[A\leftrightarrow B \big].
$$

Let $K$ be given by the fact that $\pi$ has bounded fibres. 
Notice that for every $x,y\in V(\mathcal{G})$, one has $d(x,y)-K\leq d(\pi(x),\pi(y))\leq d(x,y)$. 
Let $\ell$ be a positive integer and $x, y$ be vertices of $\mathcal{G}$ such that $d(x,y)>L(\ell):=2\ell+4r+K$. Define $u:=\pi(x)$, $v:=\pi(y)$, $A:= B_\ell(u)$ and $B:=V(\mathcal{H})\backslash \bigcup_{w: ~d(w,v)>r+\ell} B_r(w)$. Since $B$ is $r$-nice and $d(A,B)>3r$, we have 
$$
\Pp_{p}\big[\pi^{-1}(A)\leftrightarrow \pi^{-1}(B) \big]\geq \Pp_{p,s}\big[A \leadsto B \big]\geq \Pp_{q}\big[A\leftrightarrow B \big].
$$
Also notice that $B_{\ell}(v)\subset B \subset B_{\ell+r}(v)$, $\pi^{-1}(A)\subset B_{\ell+K}(x)\subset B_L(x)$ and $\pi^{-1}(B)\subset B_{\ell+r+K}(y)\subset B_L(y)$. These inclusions combined with the previous inequality give
$$\Pp_{p}\big[B_{L(\ell)}(x)\leftrightarrow B_{L(\ell)}(y) \big]\geq \Pp_{q}\big[B_{\ell}(\pi(x))\leftrightarrow B_{\ell}(\pi(y)) \big]$$
for any two vertices $x,y\in V(\mathcal{G})$ such that $d(x,y)>L(\ell)$. Notice that this inequality is still true when $d(x,y)\leq L(\ell)$, as the left hand side is then equal to 1.
Taking the infimum over $x,y \in V(\mathcal{G})$ and then sending $\ell$ to infinity gives
$$\lim_{L\rightarrow\infty}\inf_{x,y\in V(\mathcal{G})}\Pp_{p}\big[B_{L}(x)\leftrightarrow B_{L}(y) \big]\geq 
\lim_{\ell\rightarrow\infty}\inf_{u,v\in V(\mathcal{H})}\Pp_{q}\big[B_{\ell}(u)\leftrightarrow B_{\ell}(v) \big]=
1$$
where the last equality follows, as in Section~\ref{subsec:easypu}, from the fact that $q>p_u(\mathcal{H})$. It follows from the above equation (see \cite{Sch99}) that $p_u(\mathcal{G})\leq p< p_u(\mathcal{H})$.
\end{proofpu}

\begin{rem}
A recent paper of Tang \cite{tang} proves that on any quasi-transitive graph, there is a unique infinite cluster at parameter $p$ if and only if $\inf_{u,v\in V} \Pp_p[u \leftrightarrow v]>0$. By using this theorem instead of \cite{Sch99}, one can simplify the above proof: one does not need to connect large balls anymore, but only vertices.
\end{rem}

\subsection{Proof of Proposition~\ref{proptwopu}}
\label{subsec:two}

The proof follows the same lines as that of Proposition \ref{proptwo}, so we will only highlight the necessary adaptations here. 

For any two finite subsets $A,B\subset V(\mathcal{H})$, we consider the following finite dimensional approximation of the event $A\leadsto B$: for each $L$, define $\mathcal{E}^{A,B}_{L}:=\{(\omega,\alpha): \mathcal{C}_A(\omega_{L},\alpha_{L})\cap B\neq \varnothing\}$, where $\omega_{L}$ (resp. $\alpha_{L}$) is the configuration equal to $\omega$ (resp. $\alpha$) in $B_L(o)$ and equal to $0$ elsewhere. 
By the argument presented at the beginning of Section \ref{sec:prooftwo}, one can easily reduce the proof to the following deterministic lemma.

\begin{lemma}
	\label{lem:proptwopu}
	There is a constant $R$ such that the following holds. For any two non-empty finite subsets $A, B\subset V(\mathcal{H})$ such that $B$ is $r$-nice and $d(A,B)>3r$, there is some $L_0=L_0(A,B)$ such that for all $L\geq L_0$, if an edge $e$ is $p$-pivotal for $\mathcal{E}^{A,B}_L$ in a configuration $(\omega,\alpha)$, then there exist a configuration $(\omega',\alpha')$ differing from $(\omega,\alpha)$ only inside $B_R(e)$ and a vertex $z$ in $B_R(e)$ such that $z$ is $s$-pivotal for $\mathcal{E}^{A,B}_L$ in $(\omega',\alpha')$.
\end{lemma}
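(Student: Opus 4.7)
The plan is to mimic the proof of Lemma~\ref{lem:proptwo} as closely as possible, with the finite set $A$ taking the role of the origin $\{o\}$ and the finite set $B$ taking the role of the target sphere $S_L(o)$. I set $R := 3r+1$ and choose $L_0 = L_0(A,B)$ large enough that any $p$-pivotal edge $e$ for $\mathcal{E}^{A,B}_L$ satisfies $B_R(e) \subset B_L(o)$: this is possible because pivotality of $e$ forces it to lie on a path from $A$ to $B$ staying inside $B_L(o)$, and for $L_0$ sufficiently large in terms of $\diam(A \cup B)$ and $R$, such paths stay well inside $B_{L-R-1}(o)$. As in Lemma~\ref{lem:proptwo}, the argument begins by removing $\alpha$-vertices in $B_R(e)$ one at a time: if at some step the event $\mathcal{E}^{A,B}_L$ is destroyed, the last removed vertex is $s$-pivotal and we are done; otherwise we reduce to a configuration $(\omega, \alpha')$ with $\alpha' \cap B_R(e) = \varnothing$ in which $e$ is still $p$-pivotal.

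The remainder of the proof is a case analysis according to the position of $e$ with respect to $A$ and $B$. The hypothesis $d(A,B) > 3r$ ensures that at most one of $d(e,A) \leq r$ and $d(e,B) \leq r$ can hold, giving three cases. In the middle case, where $d(e,A) > r$ and $d(e,B) > r$, I take $z$ to be an endpoint of $e$ and apply the Case~a construction of Lemma~\ref{lem:proptwo} verbatim, noting that $B_r(z)$ is disjoint from both $A$ and $B$. When $d(e,A) \leq r$ (hence $d(e,B) > 2r$), I take $z = x$ and apply the Case~b construction of Lemma~\ref{lem:proptwo}, using disjointness from $B$. When $d(e,B) \leq r$ (hence $d(e,A) > 2r$), I use the $r$-niceness of $B$: since $e$ cannot lie entirely inside $B$ (otherwise the event $\mathcal{E}^{A,B}_L$ would be insensitive to the state of $e$), some endpoint $x$ of $e$ lies outside $B$, and by $r$-niceness there exists $z$ with $x \in B_r(z)$ and $B_r(z) \cap B = \varnothing$. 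The bound $d(z,A) \geq d(x,A) - r > r$ then gives $B_r(z) \cap A = \varnothing$, and the bound $d(z,e) \leq r$ gives $B_{2r+1}(z) \subset B_R(e)$, so the Case~a construction of Lemma~\ref{lem:proptwo} applies verbatim with this choice of $z$.

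The only genuinely new ingredient relative to Lemma~\ref{lem:proptwo} is the invocation of $r$-niceness of $B$ in the third case; everything else is a direct translation. I expect the main technical point to be the bookkeeping of disjointness conditions — namely that the chosen $z$ satisfies $B_r(z) \cap (A \cup B) = \varnothing$ and $B_{2r+1}(z) \subset B_R(e)$, the latter guaranteeing $\alpha' \cap B_{2r+1}(z) = \varnothing$ as required in the exploration argument. These conditions are precisely what the hypotheses $d(A,B) > 3r$ and $r$-niceness of $B$ are designed to provide, so verifying them in each case is routine.
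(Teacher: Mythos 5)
Your proposal correctly identifies $R=3r+1$ and the case split driven by proximity to $A$ and to $B$, and your treatment of $r$-niceness in the ``close to $B$'' case is sound (the distance bookkeeping $d(e,A)>2r$, $d(z,A)>r$, $B_{2r+1}(z)\subset B_R(e)$ all check out). However, there is a genuine gap at the very start: you claim that $L_0$ can be chosen so that every $p$-pivotal edge $e$ for $\mathcal{E}^{A,B}_L$ satisfies $B_R(e)\subset B_L(o)$, on the grounds that ``such paths stay well inside $B_{L-R-1}(o)$'' once $L$ is large. This is false. An open path from $A$ to $B$ in $\omega_L$ is only constrained to lie in $B_L(o)$; nothing prevents it from wandering out to the boundary $S_L(o)$, and hence a pivotal edge can be within distance $r$ of $S_L(o)$ no matter how large $L$ is. Increasing $L_0$ does not purge such configurations.

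This means your ``middle case'' ($d(e,A)>r$ and $d(e,B)>r$, take $z$ an endpoint of $e$) does not cover all remaining pivotal edges: when $d(x,S_L(o))<r$ one cannot take $z=x$, because $B_r(z)$ would stick out of $B_L(o)$ and the newly opened edges would be invisible to $\omega_L$. The paper handles this with a third sub-case inside Case~a: choose $L_0$ so that $A\cup B\subset B_L(o)$ and $d(A\cup B,S_L(o))>3r$ for all $L\geq L_0$, and when $d(x,S_L(o))<r$ take $z$ on a geodesic from $o$ to $x$ so that $x\in B_r(z)\subset B_L(o)$; the condition $d(A\cup B,S_L(o))>3r$ then forces $B_r(z)\cap(A\cup B)=\varnothing$. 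Apart from this missing boundary case, your decomposition and the verification of the disjointness conditions agree with the paper's argument.
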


\begin{proof}
	As in Lemma \ref{lem:proptwo}, it is enough to take $R=3r+1$. Given $A$ and $B$ as above, take $L_0$ such that $A\cup B \subset B_{L}(o)$ and $d(A\cup B,S_{L}(o))>3r$ for all $L\geq L_0$.
	Let $(\omega,\alpha)$ and $e$ be as in Lemma~\ref{lem:proptwopu}. As before, we can assume that $e$ is $p$-pivotal for $\mathcal{E}^{A,B}_L$ in $(\omega,\alpha')$, where $\alpha':=\alpha\setminus B_R(e)$. Again, we have two cases.
	 
	 \paragraph{Case a.} \emph{The edge $e=\{x,y\}$ is far from $A$, namely $d(e,A) > r$.}
	 
	 \vspace{0.08cm}
	 
	 Notice that, since $e$ is $p$-pivotal, we can assume without loss of generality that $x\notin B$. In this case, one can always find a vertex $z$ such that $B_r(z)\subset B_L\setminus(A\cup B)$ and $x\in B_r(z)$. Indeed, if $d(x,B)>r$ and $d(x,S_{L}(o))\geq r$, it suffices to take $z=x$; if $d(x,B)\leq r$, we use the fact that $B$ is $r$-nice to find $z$ such that $B_r(z)\cap B=\varnothing$ and $x\in B_r(z)$, which directly implies $B_r(z)\subset B_L(o)\setminus A$ since $d(B,S_{L}(o))>3r$ and $d(A,B)>3r$; and if $d(x,S_{L}(o))< r$, we can take an appropriate $z$ in the geodesic path from $o$ to $x$ in such a way that $x\in B_r(z)\subset B_L$, which directly implies $B_r(z)\cap (A\cup B)=\varnothing$ since $d(A\cup B,S_{L}(o))> 3r$. As in the proof of Lemma \ref{lem:proptwo}, we can find $u\in S_{r+1}(z)$ such that $u\in\mathcal{C}_o(\tilde{\omega},\alpha')$, where $\tilde{\omega}:=\omega\setminus E(B_{r+1}(z))$. Pick $v\in B_r(z)$ some neighbour of $u$ and define $\omega':=[\omega\setminus E(B_{r+1}(z))] \cup [E(B_{r}(z))\cup \{\{u,v\}\}]$.
	 
	 \paragraph{Case b.} \emph{The edge $e=\{x,y\}$ is close to $A$, namely $d(e,A)\leq r$.}
	 
	 \vspace{0.08cm}
	 
	 Without loss of generality, assume  $d(x,A)\leq r$. Then simply take $z=x$ and $\omega'$ given by closing in $\omega$ all the edges inside $B_{r+1}(x)$ and then opening all the edges inside $B_r(x)$, i.e. $\omega':=[\omega\setminus E(B_{r+1}(x))]\cup E(B_r(x))$.
	 
	 \vspace{0.38cm}
	
	 One can check in the same way as in the proof of Lemma \ref{lem:proptwo} that in both cases above, $z$ is $s$-pivotal for the event $\mathcal{E}^{A,B}_{L}$ in the configuration $(\omega',\alpha')$.
\end{proof}

\paragraph{Acknowledgements}
{We are indebted to Hugo Duminil-Copin, Vincent Tassion, and Augusto Teixeira for many valuable discussions. We also wish to thank Itai Benjamini for suggesting us to investigate the strict monotonicity question for $p_u$. We are grateful to Vincent Beffara, Hugo Duminil-Copin, and Aran Raoufi for comments on an earlier version of this paper. SM is funded by the ERC grant GeoBrown: he is thankful to his postdoctoral hosts Nicolas Curien and Jean-François Le Gall, the Laboratoire de Mathématiques d'Orsay, and GeoBrown for excellent working conditions. FS is thankful to his PhD advisor Hugo Duminil-Copin and IHES for excellent working conditions as well.}

\vspace{0.4cm}

\begin{small}

\end{small}
\end{document}